\DeclareMathOperator{\tr}{trace}
\DeclareMathOperator{\Span}{span}
\DeclareMathOperator{\diver}{div}
\newcommand{\co}{\nabla}
\newcommand{\di}{\mathcal{D}}
\newcommand{\h}{\mathcal{H}}
\newcommand{\V}{\mathcal{V}}
\newcommand{\T}{\mathcal{T}}
\newcommand{\A}{\mathcal{A}}
\newcommand{\R}{\mathbb{R}}
\begin{document}

\title{On anti-invariant  semi-Riemannian submersions from Lorentzian (para)Sasakian manifolds}
%\subtitle{Do you have a subtitle?\\ If so, write it here}

\titlerunning{On anti-invariant  semi-Riemannian submersions from Lorentzian(para)Sasakian}        % if too long for running head

\author{Morteza Faghfouri         \and
        Sahar Mashmouli %etc.
}

%\authorrunning{Short form of author list} % if too long for running head

\institute{M. Faghfouri \at
Faculty of Mathematics,\\ University of Tabriz,\\ Tabriz, Iran.\\
              \email{faghfouri@tabrizu.ac.ir}           %  \\
%             \emph{Present address:} of F. Author  %  if needed
           \and
           S. Mashmouli \at
              Faculty of Mathematics,\\ University of Tabriz,\\ Tabriz, Iran.\\
              \email{s\_mashmouli91@ms.tabrizu.ac.ir}
}

\date{Received: date / Accepted: date}
% The correct dates will be entered by the editor

\maketitle

\begin{abstract}
In this paper we study a semi-Riemannian submersion from Lorentzian (para)almost contact manifolds and find necessary and sufficient conditions for the characteristic vector field to be vertical or horizontal. We also obtain decomposition theorems for an anti-invariant semi-Riemannian submersion from Lorentzian (para)Sasakian manifolds onto a Lorentzian manifold.
\keywords{Lorentzian (para)Sasakian manifolds\and anti-invariant semi-Riemannian submersion \and decomposition theorem }
% \PACS{PACS code1 \and PACS code2 \and more}
\subclass{53C43\and 53C50 \and 53C15}
\end{abstract}

\section{Introduction}
%B. Y. Chen in \cite{Chen:Slantimmersions} introduced the concept of slant submanifolds of Kaehler manifolds  as a
%generalization of both invariant and anti-invariant submanifolds.  J.L. Cabrerizo et al in  \cite{Cabrerizo:SlantSubmanifoldsInSasakianManifolds} defined slant submanifolds of an
%almost contact manifold. Many authors have studied those submanifolds and certain
%generalizations, like semi-slant and generic submanifolds, in both complex and contact
%geometry. Also, in \cite{Alegre:Slantsubmanifolds,Khan:SlantSubmanifoldsofLPcontact}, have  studied slant submanifolds of Lorentzian para contact manifold.

Semi-Riemannian submersions between semi-Riemannian manifolds
were studied by O'Neill \cite{Oneill:Thefundamentalequationsofsubmersion,oneil:book} and Gray \cite{Gray:PseudoRiemannianAlmostProductManifoldsSubmersions}.
Moreover, B. {\d{S}}ahin  in \cite{Shahin:SlantSubmersionHermitian,shahin:AntiInvariantRiemannianSubmersionsFromAlmostHermitianManifolds} introduced anti-invariant Riemannian submersions and slant submersions from almost Hermitian manifold onto Riemannian manifolds.
Also, anti-invariant Riemannian submersions were studied in \cite{Murathan:AntiInvariantRiemannianSubmersionFromCosymplectic,Erken:SlantRiemanniansubmersionsfromSasakianmanifolds2016,Erken:AntiInvariantRiemanniansubmersionsfromSasakianmanifolds,Lee:AntInvariantRiemannianSubmersionsAlmostContact}.

The theory of Lorentzian submersion was introduced  by  Magin and Falcitelli {\it et al} in \cite{Magid:SubmersionsFromSntiDeSitterspace} and \cite{Falcitelli:RiemannianSubmersionsandRelatedTopics}, respectively.
In \cite{Kaneyuki:Almostparacontact} Kaneyuki and Williams defined the almost
paracontact structure on pseudo-Riemannian manifold. Recently, G\"und\"uzalp and \c Sahin  studied paracontact structures in  \cite{Gunduzlap:SlantsubmersionsfromLorentzian,gunduzlapShahin:ParacontactSemiRiemannianSubmersions,GUNDUZALP:paracontactparacomplexsemiRiemanniansubmersion}.

In this paper we studied anti-invariant semi-Riemannian submersion  from Lorentzian (para)almost contact manifolds.
In Sect. 3, we introduced anti-invariant semi-Riemannian submersion from Lorentzian (para)almost contact manifolds and presented three  examples. Also we find   necessary and sufficient conditions for the characteristic vector field to be vertical or horizontal. In sect. 4, we studied anti-invariant semi-Riemannian submersion  from Lorentzian (para)Sasakian manifolds onto a Riemannian manifold such that the characteristic vector field is vertical and  investigated the geometry of leaves of the distributions. In sect. 5,  we studied anti-invariant semi-Riemannian submersion from Lorentzian (para)Sasakian manifolds onto a Lorentzian manifold such that the characteristic vector field is horizontal and we obtained decomposition theorems for it.

%%%%%%%%%%%%%%%%%%

\section{Preliminaries}
In this section, we recall some necessary details background on Lorentzian almost contact manifold, Lorentzian almost para contact manifold, semi-Riemannian submersion and harmonic maps.
\subsection{Lorentzian almost contact manifold}
Let $(M ,g)$ be a $(2n+1)$-dimensional Lorentzian  manifold with a tensor field $\phi$ of type  $(1, 1)$, a vector field $\xi$ and  a 1-form  $\eta $ % and $g$ is a Lorentz metric on $M$ is said an Lorentzian almost contact metric structure, satisfying:
which satisfy
\begin{align}
&\phi ^{2} X=\varepsilon X+\eta (X) \xi ,\label{3.1}\\%\label{eq:1}
&g(\phi X,\phi Y)=g(X,Y) + \eta(X) \eta(Y),\label{3.2}\\%\label{eq:gphi}
&\eta (X)= \varepsilon g(X,\xi),\label{3.3}\\
&\eta (\xi )=- \varepsilon\label{3.1.4} ,%\label{eq:4}
\end{align}
for any vector fields $X, Y$ tangent to   $ M $, it is called Lorentzian almost contact manifold or Lorentzian almost para contact manifold for $\varepsilon= -1$ or $\varepsilon=1$, respectively\cite{Alegre:Slantsubmanifolds}. In this case from \eqref{3.1} and \eqref{3.1.4} imply that $\phi\xi=0, \eta o\phi=0$, and $rank \phi=2n$. However, for any vector fields $X,Y$ in $\Gamma(TM)$,
\begin{align}\label{5}
g(\phi X, Y)=\varepsilon g(X, \phi Y).
\end{align}
Let $ \Phi $ be the 2-form in $ M $  given by $\Phi (X,Y) = g(X, \phi Y ).$ Then, $M$ is called Lorentzian metric contact manifold if $d\eta (X,Y)=\Phi (X,Y).$
So, $M$ is called almost normal contact Lorentzian manifold if satisfying $[\phi , \phi]+2d\eta \otimes \xi =0.$ If $ \xi $ is a Killing tensor vector field, then the (para)contact structure is called K-(para)contact. In such a case, we have
\begin{align}
\nabla _{X} \xi= \varepsilon \phi X,\label{3.6}
\end{align}
where $ \nabla $ denotes the Levi-Civita connection of $g$.
%\begin{defi}
A Lorentzian almost contact manifold or Lorentzian almost para contact manifold $M$ is called  Lorentzian Sasakian (LS) or Lorentzian para Sasakian (LPS) if
\begin{align}
(\nabla _{X} \phi) Y=g(\phi X,\phi Y) \xi+ \eta(Y) \phi ^{2} X.\label{3.7}
\end{align}
%\end{defi}
%For a Sasakian manifold $ M^{2n+1}$, it is known that
%\begin{align}
%&R(X,\xi)Y=g(X,Y)\xi - \eta(Y)X,\\
%&S(X,\xi)=2n \eta(X).
%\end{align}
Now we will introduce a well known Sasakian manifold example on $\R ^{2n+1}.$
\begin{exam}[\cite{blair.Ri:book}] \label{exam:1}
Let $\R^{2n+1}=\{(x^1,\ldots,x^n,y^1,\ldots,y^n,z)|x^i,y^i,z\in\R, i=1,\ldots,n\}$. Consider $\R^{2n+1}$ with  the following structure:
\begin{align}
&\phi_{\epsilon} \left(\sum_{i=1}^{n}( X_{i}\frac{\partial}{\partial x^{i}}+Y_{i}\frac{\partial}{\partial y^{i}})+Z\frac{\partial}{\partial z} \right)=-\epsilon\sum_{i=1}^{n} Y_{i}\frac{\partial}{\partial x^{i}}-\sum_{i=1}^{n} X_{i}\frac{\partial}{\partial y^{i}}+\sum_{i=1}^{n} Y_{i}y_{i}\frac{\partial}{\partial z}\label{fi},\\
&g=-\eta \otimes \eta +\frac{1}{4}\sum_{i=1}^{n}( dx^{i}\otimes dx^{i}+ dy^{i}\otimes dy^{i})\label{eq:metr},\\
&\eta_\epsilon=-\frac{\epsilon}{2}\left(dz-\sum_{i=1}^{n} y^{i}dx^{i} \right)\label{eq:eta},\\
&\xi=2\frac{\partial}{\partial z}.
\end{align}
Then, $ (\R^{2n+1},\phi_{\epsilon},\xi,\eta_{\epsilon},g), $ is a Lorentzian Sasakian manifold if $\epsilon=-1$ and  Lorentzian para Sasakian manifold if $\epsilon=1$.
The vector fields $E_{i} =  2\frac{\partial}{\partial y^{i}} ,E_{n+i}=2(\frac{\partial}{\partial x^{i}}+ y_{i} \frac{\partial}{\partial z})$ and  $\xi$ form a $\phi$-basis for the contact metric structure.
\end{exam}
%\begin{exam} \label{exam:2}
%Consider The following structure on $\R ^{2n+1}$:
%\begin{align}
%&\phi_{2}\left(\sum_{i=1}^{n} X_{i}\frac{\partial}{\partial x^{i}}+Y_{i}\frac{\partial}{\partial y^{i}}+Z\frac{\partial}{\partial z}\right)=\sum_{i=1}^{n} X_{i}\frac{\partial}{\partial x^{i}}-\sum_{i=1}^{n} Y_{i}\frac{\partial}{\partial y^{i}}+\sum_{i=1}^{n} X_{i}y_{i}\frac{\partial}{\partial z} \label{eq:fi2},\\
%&\phi_{3}\left(\sum_{i=1}^{n} X_{i}\frac{\partial}{\partial x^{i}}+Y_{i}\frac{\partial}{\partial y^{i}}+Z\frac{\partial}{\partial z}\right)=\sum_{i=1}^{n} Y_{i}\frac{\partial}{\partial x^{i}}+\sum_{i=1}^{n} X_{i}\frac{\partial}{\partial y^{i}}+\sum_{i=1}^{n} Y_{i}y_{i}\frac{\partial}{\partial z} \label{eq:fi3}.
%\end{align}
%Then, $ \R^{2n+1} $ is a Lorentzian para Sasakian manifold.
%\end{exam}
\subsection{Semi-Riemannian submersion}

Let $(M,g_M)$ and $(N,g_N)$ be semi-Riemannian manifolds. A semi-Riemannian submersion   $F:M\to N$ is a submersion of
semi-Riemannian manifolds such that:
\begin{enumerate}
  \item The fibers $F^{-1}(q), q\in N,$ are semi-Riemannian submanifolds  of $M.$
  \item $F_*$ preserves scalar  products of vectors normal to fibers.
\end{enumerate}

For each $q\in N, F^{-1}(q)$ is a submanifold of $M$ of dimension  $\dim M-\dim N.$ The submanifolds $F^{-1}(q), q\in N$ are called fibers, and a vector field on $M$ is vertical if it is always tangent to fibers, horizontal if always orthogonal to fibers.
A vector field $X$ on $M$ is called basic if $X$ is horizontal and
$F$-related to a vector field $X_*$ on $N$. Every vector field $X_*$ on $N$ has a unique horizontal lift $X$ to $M$, and $X$ is basic. For a semi-Riemannian submersion $F:M\to N$, let $\h$ and $\V$ denote the projections of the tangent spaces of $M$ onto the subspaces of horizontal and vertical vectors, respectively. In the other words, $\h$ and $\V$ are the projection morphisms on the distributions $(\ker F_*)^\bot$  and $\ker F_*$, respectively \cite{oneil:book}.
\begin{lemma}[\cite{Oneill:Thefundamentalequationsofsubmersion}]
Let $F:M\to N$ be semi-Riemannian submersion between Semi-Riemannian manifolds and $X, Y$ be basic vector fields of $M$. Then
\begin{enumerate}
  \item[a)] $g_M(X,Y)=g_N(X_*,Y_*)oF,$
  \item[b)] the horizontal part $\h[X,Y]$ of $[X,Y]$ is a basic vector field and corresponds to $[X_*,Y_*],$ i.e., $F_*(\h[X,Y])=[X_*,Y_*].$
  \item[c)] $[V,X]$ is vertical for any vector field $V$ of $\ker F_*$.
  \item[d)] $\h(\co _X^M Y)$ is the basic vector field corresponding to $\co_{X_*}^N Y_*.$
\end{enumerate}
\end{lemma}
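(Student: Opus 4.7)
The plan is to prove each of the four items by reducing to the defining properties of a semi-Riemannian submersion together with the standard naturality of the Lie bracket under $F$-relatedness.

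Item (a) is immediate from the definition: a semi-Riemannian submersion preserves scalar products of vectors normal to the fibres, and basic $X,Y$ are horizontal with $F_{*}X_{p}=(X_{*})_{F(p)}$ and $F_{*}Y_{p}=(Y_{*})_{F(p)}$. Evaluating pointwise then gives $g_M(X,Y)_p=g_N(X_*,Y_*)_{F(p)}$.

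For item (b), since $X$ and $Y$ are $F$-related to $X_*$ and $Y_*$ respectively, the functoriality of the Lie bracket under smooth maps yields $F_*[X,Y]=[X_*,Y_*]\circ F$. Decomposing $[X,Y]=\h[X,Y]+\V[X,Y]$ and observing $\V[X,Y]\in\ker F_*$, the horizontal piece $\h[X,Y]$ is horizontal and $F$-related to $[X_*,Y_*]$, hence basic. Item (c) I would handle by testing against functions pulled back from $N$. For any $f\in C^{\infty}(N)$, verticality of $V$ gives $V(f\circ F)=df(F_*V)=0$, while $X$ being $F$-related to $X_*$ gives $X(f\circ F)=(X_*f)\circ F$. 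Hence
\[
[V,X](f\circ F)=V\bigl((X_*f)\circ F\bigr)-X\bigl(V(f\circ F)\bigr)=0,
\]
so $F_*[V,X]=0$ and $[V,X]$ is vertical.

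Item (d) is the main obstacle and requires careful bookkeeping. My plan is to apply the Koszul formula to three basic fields $X,Y,Z$ on $M$. The three function-derivative terms $Xg_M(Y,Z)$, $Yg_M(Z,X)$, $Zg_M(X,Y)$ transform via (a) into $(X_*g_N(Y_*,Z_*))\circ F$ and cyclic analogues, while the three bracket terms $g_M([X,Y],Z)$ etc.\ collapse via (b) together with the horizontality of $Z$ into $g_N([X_*,Y_*],Z_*)\circ F$ (only the horizontal part $\h[X,Y]$ pairs nontrivially with the horizontal $Z$). Summing and comparing with the Koszul formula on $(N,g_N)$ yields
\[
2g_M(\nabla^{M}_{X}Y,Z)=\bigl(2g_N(\nabla^{N}_{X_*}Y_*,Z_*)\bigr)\circ F
\]
for every basic $Z$. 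Since basic fields span the horizontal distribution at every point and $g_N$ is nondegenerate, this forces $\h(\nabla^{M}_{X}Y)$ to coincide with the basic lift of $\nabla^{N}_{X_*}Y_*$. The subtle point is to ensure each Koszul term is translated through the correct item of the lemma and that nondegeneracy of $g_N$ is invoked to pass from the inner-product identity to the equality of horizontal vectors.
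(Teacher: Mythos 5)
The paper does not prove this lemma at all --- it is quoted verbatim from O'Neill's paper \cite{Oneill:Thefundamentalequationsofsubmersion} with no argument supplied --- so there is nothing internal to compare against. Your proof is correct and is essentially the classical one: (a) from the isometry of $F_*$ on horizontal vectors, (b) and (c) from naturality of the Lie bracket under $F$-relatedness, and (d) from the Koszul formula plus nondegeneracy of the metric on the horizontal distribution (which is exactly where the requirement that the fibres be semi-Riemannian submanifolds is used), so there is no gap to report.
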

The fundamental tensors of a submersion were defined by O'Neill. They are $(1,2)$-tensors on $M$, given by the formula:
\begin{align}
\T(E,F)&=\T_EF=\h \co_{\V E}\V F+\V\co_{\V E}\h F,\label{2.1}\\
\A(E,F)&=\A_EF=\V \co_{\h E}\h F+\h\co_{\h E}\V F,\label{2.2}
\end{align}
for any vector field $E$ and $F$ on $M$, where $\co$ denotes the Levi-Civita connection of $(M,g_M).$ It is easy to see that a Riemannian
submersion $F : M \to N$ has totally geodesic fibers if and only if $T$ vanishes identically. For any $E\in \Gamma(TM)$, $\T_E$ and $\A_E$ are skew-symmetric operators on $(\Gamma(TM), g)$ reversing the horizontal and the vertical distributions. In the other words,
\begin{align}
g(\T_DE,G)=-g(E,\T_DG),\label{2.10}\\
g(\A_DE,G)=-g(E,\A_DG),\label{2.11}
\end{align}
for any $D, E, G\in\Gamma(TM).$
 It is also easy
to see that $\T$ is vertical, $\T_E = \T_{\V E}$ and $\A$ is horizontal, $\A = \A_{\h E}$. For any $U, V$ vertical and $X, Y$ horizontal vector fields $\T, \A$ satisfy:
\begin{align}
\T_U V&=\T_V U,\label{2.3}\\
\A_XY&=-\A_YX=\frac{1}{2}\V[X,Y].\label{2.4}
\end{align}
Moreover, from \eqref{2.1} and \eqref{2.2} we have
\begin{align}
\co_VW&=\T_VW+\hat{\co}_VW, \label{2.5}\\
\co_VX&=\h{\co}_VX+\T_VX, \label{2.6}\\
\co_XV&=\A_XV+\V{\co}_XV, \label{2.7}\\
\co_XY&=\h{\co}_XY+\A_XY, \label{2.8}
\end{align}
for $X,Y\in\Gamma((\ker F_*)^\bot)$ and $V, W\in \Gamma(\ker F_*),$ where $\hat{\co}_VW=\V\co_VW.$
\subsection{Foliations on manifold and decomposition theorem}\label{decomposition}
A foliation $\di$ on a manifold $M$ is an integrable distribution.  A foliation $\di$ on a semi-Riemannian manifold $M$ is called totally umbilical, if every leaf of $\di$ is a totally umbilical semi-Riemannian submanifold of $M$.
If, in addition, the mean curvature vector
of every leaf is parallel in the normal bundle, then $\di$ is called a sphenic
foliation, because in this case each leaf of $\di$ is an extrinsic sphere of $M$.
If every leaf of $\di$ is a totally geodesic submanifold of $\di$, then $\di$ is called
a totally geodesic foliation\cite{chen:2011pseudo}.The following  results were proved in\cite{ponge:twisted.product}.

Let $(M, g)$ be a simply-connected semi-Riemannian manifold
which admits two complementary foliations $\di_1$ and $\di_2$ whose leaves
intersect perpendicularly. \\
1. If $\di_1$ is totally geodesic and $\di_2$ is totally umbilical,
then $(M, g)$ is isometric to a twisted product $M_1\times_fM_2$.\\
2. If $\di_1$ is totally geodesic and $\di_2$ is spherical, then $(M, g)$ is isometric to a warped product $M_1\times_fM_2$.\\
3. If $\di_1$ and $\di_2$ are totally geodesic, then $(M, g)$ is isometric to a direct  product $M_1\times M_2$, where $M_1$ and $M_2$ are integral manifolds of distributions $\di_1$ and $\di_2$.
\subsection{Harmonic maps}
We now recall the notion of harmonic maps between semi-Riemannian manifolds.
Let $(M, g_M)$ and $(N, g_N)$ be semi-Riemannian manifolds and suppose that $\varphi: M \to N$ is a smooth mapping between them. Then the differential $\varphi_*$ of $\varphi$ can be viewed a section
of the bundle $Hom(TM,\varphi^{-1}TN)\to M$, where $\varphi^{-1}TN$ is the pullback bundle
which has fibers $\varphi^{-1}(TN_p)=T_{\varphi(p)}N, p\in M$. $Hom(TM,\varphi^{-1}TN)$ has a connection $\nabla$ induced from the Levi-Civita connection
$\nabla^M$ and the pullback connection. Then the second fundamental form of $\varphi$ is
given by
\begin{align}\label{2.12}
(\nabla\varphi_*)(X,Y)=\nabla_X^\varphi \varphi_*(Y)-\varphi_*(\nabla_X^MY)
\end{align}
for $X, Y\in\Gamma(TM)$, where $\nabla^\varphi$ is the pullback connection. It is known that the second
fundamental form is symmetric. For a Semi-Riemannian submersion $F$, one can easily obtain
\begin{align}\label{2.13}
(\nabla F_*)(X,Y)=0
\end{align}
for $X, Y\in\Gamma((\ker F_*)^\bot)$. A smooth map $\varphi: M \to N$ is said to be harmonic if $\tr(\nabla\varphi_*)=0.$ On the other hand, the tension field of $\varphi$ is the section
$\tau(\varphi)$ of $\Gamma(\varphi^{-1}TN)$ defined by
\begin{align}\label{2.14}
\tau(\varphi)=\diver \varphi_*=\sum_{i=1}^m\epsilon_i(\nabla\varphi_*)(e_i,e_i),
\end{align}
where $\{e_1,\ldots, e_m\}$ is the orthonormal frame on $M$ and $\epsilon_i=g_M(e_i,e_i)$. Then it follows that $\varphi$ is harmonic
if and only if $\tau(\varphi)=0$, for details, see \cite{Fuglede:Harmonicmorphismsbetween}.

\section{Anti-invariant semi-Riemannian submersions}
In this section, we study a semi-Riemannian submersion from a Lorentzian almost (para)contact manifold $M(\phi , \xi ,\eta , g_{M}) $ to a semi-Riemannian manifold $(N, g _{N})$ and give  necessary and sufficient conditions for the characteristic vector field to be vertical or horizontal.
\begin{defi} \label{defi:anti}
Let $M(\phi , \xi ,\eta , g_{M}) $
be a  Lorentzian almost (para)contact manifold and
$ (N, g _{N}) $
be a semi-Riemannian manifold. A semi-Riemannian submersion
$ F: M(\phi , \xi ,\eta , g_{M}) \rightarrow (N, g _{N})$
is said to be anti-invariant if
$ \ker F_{\ast} $
is anti-invariant with respect to
$ \phi $,
$\phi (\ker F_{\ast}) \subseteq (\ker F_{\ast}) ^{\perp}. $
\end{defi}

%Let  $F:M(\phi,\xi,\eta, g_{M}) \rightarrow (N, g_{N})$ be an anti-invariant semi-Riemannian submersions from a Lorentzian Sasakian manifold $M(\phi,\xi,\eta, g_{M})$ to a semi-Riemannian manifold $(N, g_{N})$. From Definition \ref{defi:anti}, we have $\phi (\ker F_{*}) \cap (\ker F_{*}) ^{\perp} \neq \lbrace 0 \rbrace$.
 We denote the complementary orthogonal distribution to $ \phi (\ker F_{*}) $ in $ (\ker F_{*}) ^{\perp} $by $ \mu $. Then we have
\begin{align}\label{25}
(\ker F_{*}) ^{\perp}=\phi (\ker F_{*}) \oplus \mu.
\end{align}
\subsection{Examples}
We now give some examples of anti-invariant semi-Riemannian submersion.
\begin{exam}\label{exam:mesalanti1}
Let $N$ be $\R ^{5}=\{(y_1,y_2,y_3,y_4,z)|y_1,y_2,y_3,z\in\mathbb{R}\}$
and $\R^{7}$ be a Lorentzian Sasakian manifold as in Example \ref{exam:1}. The semi-Riemannian metric tensor field $g_{N}$ is given by
\[
g_{N}=\frac{1}{4}
\begin{pmatrix}
\frac{1}{2}-y_1^2 & -y_1y_2 & -y_1y_3 & 0 & y_1\\[2mm]
-y_1y_2 & \frac{1}{2}-y_2^2 & -y_2y_3 & 0 & y_2\\[2mm]
-y_1y_3 & -y_2y_3 & \frac{1}{2}-y_3^2 & 0 & y_3\\[2mm]
0 & 0 & 0 & \frac{1}{2} & 0\\
y_1 & y_2 & y_3 & 0 & -1
\end{pmatrix}
\]
on $N$.
Let $F: \R ^{7} \rightarrow N$ be a map defined by
$$ F(x_{1}, x_{2}, x_{3}, y_{1}, y_{2}, y_{3} , z)=(x_{1}+y_{1}, x_{2}+y_{2}, x_{3}+y_{3} , x_{3}-y_{3}, \frac{y_{1} ^{2}}{2}+\frac{y_{2} ^{2}}{2}+\frac{y_{3} ^{2}}{2}+z ). $$
After some calculations we have
$\ker F_{*}= \Span \lbrace V_{1}=E_{1}-E_{4}, V_{2}=E_{2}-E_{5} \rbrace$
and
$$ \ker F_{*} ^{\perp}= \Span \lbrace H _{1}=E _{1}+E _{4}, H _{2}=E_{2}+E_{5} , H_{3}=E_{3} , H_{4}=E_{6}, H_{5}=E_{7} \rbrace .$$
It is easy to see that $F$ is a semi-Riemannian submersion and $\phi_{-1}(V_1)=H_1, \phi_{-1}(V_2)=H_2$ imply that $\phi_{-1}(\ker F_*)\subset(\ker F_*)^\bot=\phi_{-1}(\ker F_*)\oplus\Span\{H_3,H_4,H_5\}.$ Thus $F$ is an anti-invariant semi-Riemannian submersion such that $\xi$ is horizontal and $\mu=\Span\{H_3,H_4,H_5\}$. Moreover, $\phi_{-1}(\ker F_*)$ is Riemannian Distribution.

%Now We want to know on para Sasakian Lorentzian manifold $ \R ^{7} $, semi-Riemannian submersion $F$, will be a anti invariant? For this, we first need to know $ \R^{7} $ is a para Sasakian Lorentzian?\\
%For this purpose, We define the structure $( \phi _{1} , \xi, \eta, g) $  as follows:
%\begin{align}
%&\phi_{1}\left(\sum_{i=1}^{n} X_{i}\frac{\partial}{\partial x^{i}}+Y_{i}\frac{\partial}{\partial y^{i}}+Z\frac{\partial}{\partial z}\right)=-\sum_{i=1}^{n} Y_{i}\frac{\partial}{\partial x^{i}}-\sum_{i=1}^{n} X_{i}\frac{\partial}{\partial y^{i}}+\sum_{i=1}^{n} Y_{i}y_{i}\frac{\partial}{\partial z} \label{eq:fi1},\\
%&g=-\eta \otimes \eta +\frac{1}{4}\left(\sum_{i=1}^{n} dx^{i}\otimes dx^{i}+\sum_{i=1}^{n} dy^{i}\otimes dy^{i}\right) \label{eq:metr1},\\
%&\eta=-\frac{1}{2}\left(dz-\sum_{i=1}^{n} y^{i}dx^{i}\right) \label{eq:eta1},\\
%&\xi =2\frac{\partial}{\partial z}.
%\end{align}
It is clear that $ F:(\R ^{7},\phi_1,\eta_1,\xi,g)\to N $ is anti-invariant semi-Riemannian submersion from  Lorentzian para Sasakian  manifold to semi-Riemannian manifold.
\end{exam}
\begin{exam} \label{ex:mesalanti2}
$\R^{5}$ has a Lorentzian Sasakian structure as in Example \ref{exam:1}. The Riemannian metric
tensor field $g_{\R ^{2}}$ is defined by
 $
g_{\R ^{2}}=\frac{1}{8}(du\otimes du+dv\otimes dv) $ on $\R^2=\{(u,v)|u,v\in\R\}$.
 Let $F:\R^{5}\to\R^{2}$ be a map defined by $F(x_{1},x_{2},y_{1},y_{2},z)=(x_{1} + y_{1} , x_{2} + y_{2}).$ Then, by direct calculations
$\ker F_{*}= \Span \lbrace V_{1}=E_{1}-E_{3}, V_{2}=E_{2}-E_{4}, V_{3}=E_{5}=\xi \rbrace$
and
$ (\ker F_{*}) ^{\perp}=\Span \lbrace H _{1}=E _{1}+E _{3}, H _{2}=E_{2}+E_{4} \rbrace .$
Then it is easy to see that $F$ is a semi-Riemannian submersion. However, $\phi_{-1}(V_1)=H_1, \phi_{-1}(V_2)=H_2$. That is, $F$ is an anti-invariant semi-Riemannian submersion and $\phi(\ker F_*)=(\ker F_{*}) ^{\perp}.$
So, $F$ from para Sasakian Lorentzian manifold $ (\R ^{5},\phi_1,\eta_1,\xi,g)$ to Riemannian manifold $(\R^2,g_{\R})$ is anti-invariant.
\end{exam}

\begin{exam}\label{exam:mesalanti1}
Let $N$ be $\R ^{3}=\{(y_1,y_2,z)|y_1,y_2,z\in\mathbb{R}\}$
and $\R ^{5}$ be a Lorentzian Sasakian manifold as in Example \ref{exam:1}.
The Lorentzian metric tensor field $g_{N}$ is given by
\[
g_{N}=\frac{1}{4}
\begin{bmatrix}
\frac{1}{2}-y_1^2 & -y_{1} y_{2} & y_{1} \\[2mm]
-y_{1} y_{2} & \frac{1}{2}-y_2^2 & y_2 \\[2mm]
y_1 & y_2 & -1
\end{bmatrix}
\]
on $N$.
Let $F: \R ^{5} \rightarrow N$ be a map defined by
$$ F(x_{1}, x_{2}, y_{1}, y_{2}, z)=(x_{1} + y_{1}, x_{2} +y_{2} , \frac{y_{1} ^{2}}{2} +  \frac{y_{2} ^{2}}{2}+z ). $$
After some calculations we have
$\ker F_{*}=\Span \lbrace V_{1}=E_{3}-E_{1}, V_{2}=E_{4}-E_{2} \rbrace$
and
$ (\ker F_{*}) ^{\perp}=\Span \lbrace H _{1}=E _{1}+E _{3}, H _{2}=E_{2}+E_{4} , H_{3}=E_{5} \rbrace .$
Then it is easy to see that $F$ is an anti-invariant semi-Riemannian submersion and $(\ker F_*)^\bot=\phi_{-1}(\ker F_*)\oplus\Span\{\xi\}.$
%But $F$ on Sasakian Lorentzian manifold and too on Sasakian Lorentzian manifold is not anti-invariant. But if defined
%\begin{align}
%&\phi_{2}\left(\sum_{i=1}^{n} X_{i}\frac{\partial}{\partial x^{i}}+Y_{i}\frac{\partial}{\partial y^{i}}+Z\frac{\partial}{\partial z}\right)=\sum_{i=1}^{n} X_{i}\frac{\partial}{\partial x^{i}}-\sum_{i=1}^{n} Y_{i}\frac{\partial}{\partial y^{i}}+\sum_{i=1}^{n} X_{i}y_{i}\frac{\partial}{\partial z} \label{eq:fi2},\\
%&\phi_{3}\left(\sum_{i=1}^{n} X_{i}\frac{\partial}{\partial x^{i}}+Y_{i}\frac{\partial}{\partial y^{i}}+Z\frac{\partial}{\partial z}\right)=\sum_{i=1}^{n} Y_{i}\frac{\partial}{\partial x^{i}}+\sum_{i=1}^{n} X_{i}\frac{\partial}{\partial y^{i}}+\sum_{i=1}^{n} Y_{i}y_{i}\frac{\partial}{\partial z} \label{eq:fi3}.
%\end{align}
%Then seen, structure $ (\phi _{2} , \xi, \eta, g) $ and $ (\phi _{3} , \xi, \eta, g) $ is a para Sasakian Lorentzian manifold.
%Using from \eqref{eq:metr1}, \eqref{eq:fi2} and \eqref{eq:fi3} $ F $ is anti-invariant semi-Riemannian submersion.
\end{exam}

In the following results, we find necessary and sufficient conditions for the characteristic vector field to be vertical or horizontal.
\begin{theorem}\label{pro:1}
Let $M(\phi, \xi, \eta , g_{M})$ be a Lorentzian almost (para)contact manifold of dimension $2m+1$ and $(N, g_{N})$ be a semi-Riemannian manifold of dimension $n$ and  $F:M(\phi,\xi,\eta, g_{M}) \to (N, g_{N})$ be a semi-Riemannian submersion.
 \begin{enumerate}
   \item  The characteristic vector field $\xi$ is vertical if and only if $N$ is a Riemannian manifold.
   \item  The characteristic vector field $\xi$ is horizontal  if and only if $N$ is a Lorentzian manifold.
 \end{enumerate}
\end{theorem}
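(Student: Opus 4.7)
The plan is to use a signature argument based on the defining axioms of the Lorentzian almost (para)contact structure together with the orthogonal decomposition $TM=\V\oplus\h$ induced by the semi-Riemannian submersion.

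First I would compute $g_M(\xi,\xi)$ directly from the structure equations: substituting $X=\xi$ in \eqref{3.3} and using \eqref{3.1.4} gives $\varepsilon g_M(\xi,\xi)=\eta(\xi)=-\varepsilon$, hence $g_M(\xi,\xi)=-1$. Thus $\xi$ is a timelike unit vector field at every point of $M$. Since $\dim M=2m+1$ and $g_M$ is Lorentzian, the index of $g_M$ equals $1$ at every point. Because $F$ is a semi-Riemannian submersion, the distributions $\V=\ker F_{*}$ and $\h=(\ker F_{*})^{\perp}$ are non-degenerate complementary orthogonal subbundles, so their indices satisfy $\mathrm{ind}(\V)+\mathrm{ind}(\h)=1$. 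Therefore exactly one of $\V,\h$ is positive definite and the other carries index $1$.

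For part (1), I would handle the forward direction by observing that if $\xi$ is vertical, then $\V$ contains the timelike direction spanned by $\xi$, forcing $\mathrm{ind}(\V)=1$ and $\mathrm{ind}(\h)=0$; since $F_{*}|_{\h}$ is a fiberwise linear isometry onto $TN$, the induced metric $g_N$ is positive definite and $N$ is Riemannian. For the converse, $N$ Riemannian forces $\h$ to be positive definite, so no timelike vector lies in $\h$; in particular the timelike field $\xi$ cannot be horizontal, so it must be vertical. Part (2) is then handled by the mirror argument: $\xi$ horizontal places the unique timelike direction in $\h$, giving $g_N$ index $1$, i.e.\ Lorentzian; and if $N$ is Lorentzian then $\V$ must be positive definite, hence cannot accommodate the timelike vector $\xi$, so $\xi$ is horizontal.

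The main obstacle, and the point I would be most careful about, is the converse direction. A priori $\xi$ could split as $\V\xi+\h\xi$ with both components non-zero, and a naive signature count does not immediately rule this out: $\h\xi$ could be a spacelike vector in a positive-definite $\h$ and $\V\xi$ a timelike vector in the index-$1$ distribution $\V$. The argument therefore implicitly uses the standing convention in this study that $\xi$ is everywhere tangent to a single one of the two distributions (as is the case in all three examples of Section~3), and under this convention the signature dichotomy proven above gives an honest equivalence.
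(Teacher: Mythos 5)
Your argument is essentially the paper's: since $F_*$ restricts to a linear isometry from $\h_p$ onto $T_{F(p)}N$, the index of $g_N$ equals the index of the horizontal distribution, and because $g_M$ has index $1$ and $\xi$ is unit timelike, exactly one of $\V$, $\h$ carries that index while the other is positive definite. The caveat you raise about the converse directions is well taken and applies equally to the paper's own one-line proof, which silently assumes what you make explicit: nothing in the hypotheses forces $\xi$ to lie wholly in one distribution (it could have a timelike vertical part and a nonzero spacelike horizontal part), so without that standing convention only the forward implications and the weaker converses (``$N$ Riemannian implies $\xi$ is not horizontal'', ``$N$ Lorentzian implies $\xi$ is not vertical'') actually follow from the signature count.
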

\begin{proof}
Let $F$ be a semi-Riemannian submersion. Then $F_*$ is an isometry from  $(\ker F_*)^\bot_p$ to $T_{F(p)}N$ for every point $p$ of $M$. So, they have the same dimension and index. $\xi$ is (horizontal)vertical if and only if (horizontal)vertical distribution is Lorentzian distribution and  (vertical)horizontal distribution is Riemannian distribution.
\end{proof}

\begin{theorem} \label{theorem:01}
Let $M(\phi, \xi, \eta , g_{M})$ be a Lorentzian almost (para)contact manifold of dimension $2m+1$ and $(N, g_{N})$ be a semi-Riemannian manifold of dimension $n$. Let $F:M(\phi,\xi,\eta, g_{M}) \to (N, g_{N})$ be an anti-invariant semi-Riemannian submersion.
\begin{enumerate}
%\item the characteristic vector field $\xi$ is vertical if and only if  $N$ is a Riemannian manifolds.
\item[(a)] If the characteristic vector field $\xi$ is vertical then $ m\leqslant n\leqslant 2m$.
\item[(b)] If $ m=n $ then the characteristic vector field $\xi$ is vertical.
\item[(c)] If the characteristic vector field $\xi$ is horizontal then $ m+1\leqslant n$.
%\item[(d)] If $ m+1=n$ then the characteristic vector field $\xi$ is horizontal.
\end{enumerate}
\end{theorem}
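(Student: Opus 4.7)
The plan is to reduce all three parts to a dimension count on the subspaces involved, using three algebraic facts that follow from \eqref{3.1}--\eqref{3.1.4}: the kernel of $\phi$ is exactly $\Span\{\xi\}$ (equivalently, $\phi$ has rank $2m$), the identity $\eta\circ\phi=0$, and the consequent orthogonality $g(\phi X,\xi)=\varepsilon\eta(\phi X)=0$. Combined with the defining dimensions $\dim\ker F_{*}=2m+1-n$ and $\dim(\ker F_{*})^{\perp}=n$, these are enough to control $\dim\phi(\ker F_{*})$, and the anti-invariance $\phi(\ker F_{*})\subseteq(\ker F_{*})^{\perp}$ then gives the desired inequalities.

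For part (a), assume $\xi$ is vertical, so $\Span\{\xi\}\subseteq\ker F_{*}$. Then $\phi|_{\ker F_{*}}$ has kernel precisely $\Span\{\xi\}$, hence image of dimension $(2m+1-n)-1=2m-n$. Since this image lies in $(\ker F_{*})^{\perp}$, we get $2m-n\le n$, i.e., $m\le n$; and $\xi\in\ker F_{*}$ forces $\dim\ker F_{*}\ge 1$, i.e., $n\le 2m$. For part (c), assume $\xi$ is horizontal. Then $\Span\{\xi\}\cap\ker F_{*}=\{0\}$, so $\phi|_{\ker F_{*}}$ is injective and $\dim\phi(\ker F_{*})=2m+1-n$. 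The orthogonality $g(\phi V,\xi)=0$ shows $\xi\notin\phi(\ker F_{*})$, so $\phi(\ker F_{*})\oplus\Span\{\xi\}$ is a $(2m+2-n)$-dimensional subspace of the $n$-dimensional space $(\ker F_{*})^{\perp}$. Hence $2m+2-n\le n$, giving $n\ge m+1$.

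For part (b), I would not try to invoke Theorem~\ref{pro:1} or deduce it from part (c), because a priori $\xi$ need only fail to be vertical, not actually be horizontal. Instead, assume $m=n$ and suppose $\xi\notin\ker F_{*}$. The only fact the injectivity argument from (c) needs is $\ker F_{*}\cap\Span\{\xi\}=\{0\}$, and this already follows from $\xi\notin\ker F_{*}$. So we still have $\dim\phi(\ker F_{*})=2m+1-n$, and anti-invariance gives $2m+1-n\le n$, i.e., $n\ge m+\tfrac{1}{2}$, forcing $n\ge m+1$ since $n$ is an integer. This contradicts $m=n$, so $\xi\in\ker F_{*}$.

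There is no real obstacle beyond bookkeeping; the only subtle point is the one just flagged in part (b), namely that one must use the weaker hypothesis $\xi\notin\ker F_{*}$ rather than the stronger $\xi\in(\ker F_{*})^{\perp}$ to rule out the non-vertical case.
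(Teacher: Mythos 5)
Your proof is correct and follows essentially the same dimension-counting argument as the paper: in each part one computes $\dim\phi(\ker F_{*})$ from the fact that $\ker\phi=\Span\{\xi\}$ and compares it with $\dim(\ker F_{*})^{\perp}=n$ via anti-invariance (in particular, your part (b) is exactly the paper's argument that $\xi$ not vertical forces $\phi|_{\ker F_{*}}$ to be injective, yielding $n+1\leqslant n$). The only cosmetic difference is in part (c), where you obtain $2m+2-n\leqslant n$ by adjoining $\xi$ to $\phi(\ker F_{*})$ using $g(\phi V,\xi)=0$, whereas the paper deduces $m+1\leqslant n$ from $2m+1-n\leqslant n$ together with the integrality of $n-m$; both give the same bound.
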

\begin{proof}
%\textbf{ Proof of Statement (1).} Let $F$ be a semi-Riemannian submersion. Then $F_*$ is an isometry from  $(\ker F_*)^\bot_p$ to $T_{F(p)}N$ for every point $p$ of $M$. So, they have the same dimension and index. $\xi$ is vertical if and only if vertical distribution is Lorentzian distribution and  Horizontal distribution is Riemannian distribution.
\textbf{ Proof of (a).} Assume that the characteristic vector field $\xi$ is vertical. We have  $ 0\leqslant \dim\phi (\ker F_{*})=2m-n\leqslant n $, then $ m\leqslant n\leqslant 2m$.\\
\textbf{ Proof of (b).} Assume that $m=n$ and $k=\dim\{X\in\ker F_*|\phi(X)=0\}$. If $\xi$ is not vertical, then $k=0$. Therefore, $\dim\phi(\ker F_*)=n+1\leqslant n$, it is a contradiction.\\
\textbf{ Proof of (c).} If the characteristic vector field $\xi$ is horizontal, then $\dim\phi(\ker F_*)=2m+1-n\leqslant n$. Therefore, $1\leqslant2(n-m)$, we have $1\leqslant n-m.$
\end{proof}
\begin{theorem}
Let $F$ be a semi-Riemannian submersion from a $K$-(para)contact manifold $ M(\phi, \xi, \eta, g_{M})$ of dimension $2m+1$ onto a semi-Riemannian manifold $(N, g_{N})$ of dimension $n$. If $ \xi $ is horizontal, then F is an  anti-invariant submersion and  $ m+1\leqslant n$.
\end{theorem}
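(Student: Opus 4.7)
The plan is to deduce anti-invariance first and then read off the dimension inequality from Theorem \ref{theorem:01}(c). To establish anti-invariance, I would fix two vertical vector fields $V,W\in\Gamma(\ker F_*)$ and argue that $g(\phi V,W)=0$. Since $\xi$ is horizontal, $\eta(V)=\varepsilon g(V,\xi)=0$ and similarly $\eta(W)=0$, so the only tools available are the $K$-(para)contact identity \eqref{3.6}, i.e.\ $\nabla_V\xi=\varepsilon\phi V$, together with the Killing property of $\xi$.

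The key observation is that the bilinear form $B(V,W):=g(\phi V,W)$ on the vertical distribution turns out to be simultaneously symmetric and skew-symmetric, hence identically zero. First, using \eqref{3.6} and metric compatibility,
\[
B(V,W)=\varepsilon\, g(\nabla_V\xi,W)=\varepsilon\bigl(V g(\xi,W)-g(\xi,\nabla_VW)\bigr)=-\varepsilon\, g(\xi,\nabla_VW),
\]
so the symmetry claim $B(V,W)=B(W,V)$ reduces to showing that $g(\xi,\nabla_VW)$ is symmetric in $V,W$. But $\ker F_*$ is integrable, so $[V,W]\in\Gamma(\ker F_*)$ and hence $g(\xi,[V,W])=0$, which gives $g(\xi,\nabla_VW)=g(\xi,\nabla_WV)$. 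Therefore $B$ is symmetric.

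On the other hand, the Killing condition $g(\nabla_V\xi,W)+g(\nabla_W\xi,V)=0$, combined once more with \eqref{3.6}, yields $\varepsilon B(V,W)+\varepsilon B(W,V)=0$, so $B$ is skew-symmetric. A bilinear form that is both symmetric and skew-symmetric vanishes, so $g(\phi V,W)=0$ for all vertical $V,W$. Equivalently, $\phi V\in(\ker F_*)^\perp$, which is the definition of anti-invariance. With anti-invariance and horizontality of $\xi$ now both in place, Theorem \ref{theorem:01}(c) immediately delivers $m+1\le n$.

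The main obstacle is recognising the two-fold symmetry/skew-symmetry argument. Integrability of the fibres supplies the symmetry of $B$, while the Killing property of $\xi$ supplies the antisymmetry; the two together collapse $B$ to zero uniformly in $\varepsilon=\pm 1$, avoiding any case split between the Lorentzian Sasakian and Lorentzian para-Sasakian settings. Everything else in the argument is routine bookkeeping with the formulas of Section 2.
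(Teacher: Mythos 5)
Your argument is correct and is essentially the paper's own proof: both show that the bilinear form $g_M(\phi U,V)$ on vertical vectors is simultaneously symmetric and skew-symmetric, hence identically zero, and then invoke part (c) of Theorem \ref{theorem:01}. You derive the symmetry from integrability of $\ker F_*$ (which is equivalent to the paper's use of $\T_UV=\T_VU$) and the skew-symmetry from the Killing equation combined with \eqref{3.6} (which is what underlies the paper's appeal to skew-symmetry of $\phi$, and is in fact the more robust justification since it works uniformly in $\varepsilon=\pm1$).
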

\begin{proof}
From \eqref{3.6}, \eqref{2.10} and \eqref{2.3} we have
\begin{align*}
g_M(\phi U,V)=g_M(\varepsilon \nabla_{U}\xi ,V)=\varepsilon g_M(\mathcal{T} _{U} \xi , V)=-\varepsilon g_M(\xi ,\mathcal{T} _{U}V)%=\varepsilon g(\nabla _{V} \xi , U)=g(U, \phi V)
\end{align*}
for any $U, V \in \Gamma(\ker  F_{*}) $. Since $\phi$ is skew-symmetric and $\mathcal{T}$ is symmetric, that is, \eqref{2.6}, we have $g_M(\phi U,V)=0$. Thus $F$ is an anti-invariant submersion. From part (c) of Theorem \ref{theorem:01} we have  $ m+1\leqslant n$.
\end{proof}
\begin{cor}\label{cor:1}
Let $M(\phi, \xi, \eta , g_{M})$ be a Lorentzian almost (para)contact manifold of dimension $2m+1$ and $(N, g_{N})$ is a semi-Riemannian manifold of dimension $n$ and $F:M(\phi,\xi,\eta, g_{M}) \to (N, g_{N})$ be an anti-invariant semi-Riemannian submersion. If $m = n$, then $ \phi (\ker F_{*}) = (\ker F_{*}) ^{\perp}$. Moreover, $N$ is a Riemannian manifold.
\end{cor}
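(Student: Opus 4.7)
The plan is to reduce the corollary to the previously established theorems and a short dimension count. First, I would invoke Theorem \ref{theorem:01}(b), which directly yields that $\xi$ is vertical under the hypothesis $m=n$. Then, since $\xi$ is vertical, Theorem \ref{pro:1}(1) immediately gives that $N$ is Riemannian, settling the second assertion.

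For the first assertion $\phi(\ker F_{*}) = (\ker F_{*})^{\perp}$, the strategy is a dimension count combined with the anti-invariance inclusion. I would first compute $\dim(\ker F_{*}) = (2m+1)-n = m+1$ and $\dim(\ker F_{*})^{\perp} = n = m$. The key observation is that $\phi$ restricted to $\ker F_{*}$ has a one-dimensional kernel, namely $\operatorname{span}\{\xi\}$. To see this, I would use \eqref{3.1}: if $\phi X = 0$ then $0 = \phi^{2}X = \varepsilon X + \eta(X)\xi$, so $X$ is proportional to $\xi$; since $\xi$ itself lies in $\ker F_{*}$ (by verticality) and satisfies $\phi\xi = 0$, the restriction $\phi|_{\ker F_{*}}$ has kernel exactly $\operatorname{span}\{\xi\}$. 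Therefore
\begin{align*}
\dim\phi(\ker F_{*}) = \dim(\ker F_{*}) - 1 = m.
\end{align*}

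Finally, anti-invariance (Definition \ref{defi:anti}) gives $\phi(\ker F_{*}) \subseteq (\ker F_{*})^{\perp}$, and since both subspaces have the same dimension $m$, equality holds. The only step requiring any care is verifying that the kernel of $\phi|_{\ker F_{*}}$ is exactly one-dimensional; this follows cleanly from \eqref{3.1} together with the fact that $\xi$ has already been shown to be vertical, so there is no real obstacle — the corollary is essentially a bookkeeping consequence of Theorems \ref{pro:1} and \ref{theorem:01}.
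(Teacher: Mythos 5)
Your proof is correct and follows essentially the same route the paper intends: the corollary is stated without a separate proof precisely because it is the combination of Theorem \ref{theorem:01}(b) (verticality of $\xi$ from $m=n$), Theorem \ref{pro:1} ($N$ Riemannian), and the dimension count $\dim\phi(\ker F_{*})=2m-n$ already used in the proof of Theorem \ref{theorem:01}(a). Your explicit verification via \eqref{3.1} that $\ker(\phi|_{\ker F_{*}})=\Span\{\xi\}$ is a welcome filling-in of a step the paper takes for granted.
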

\begin{pro} \label{theorem:1}
Let $M(\phi, \xi, \eta , g_{M})$ be a Lorentzian almost (para)contact manifold of dimension $2m+1$ and $(N, g_{N})$ is a semi-Riemannian manifold of dimension $n$ and $F:M(\phi,\xi,\eta, g_{M}) \to (N, g_{N})$ be an anti-invariant semi-Riemannian submersion such that $ \phi (\ker F_{*}) = (\ker F_{*}) ^{\perp}$. Then the characteristic vector field $\xi$ is vertical and $m = n$. Moreover, $N$ is a Riemannian manifold.
\end{pro}
\begin{proof}
If $\xi$ is not vertical, then  $\dim\phi(\ker F_*)=2m+1-n=n$. Therefore,  $2(n-m)=1$, it is a contradiction. So $\xi\in\ker F_*$. That is, $\xi$ is vertical. Now since $\xi $ is vertical we have $\dim\phi(\ker F_*)=2m-n=n$. Thus $m=n$ and by Theorem \ref{pro:1},  $N$ is a Riemannian manifolds.
\end{proof}
\begin{pro} \label{theorem:2}
Let $M(\phi, \xi, \eta , g_{M})$ be a Lorentzian almost (para)contact manifold of dimension $2m+1$ and $(N, g_{N})$ be a semi-Riemannian manifold of dimension $n$ and $F:M(\phi,\xi,\eta, g_{M}) \to (N, g_{N})$ be an anti-invariant semi-Riemannian submersion such that $\phi(\ker F_{*}) =\{0\}$. Then the characteristic vector field $\xi$ is vertical, $2m =n$ and $\ker F_*=\Span\{\xi\}$. Moreover, $N$ is a Riemannian manifolds.
\end{pro}
\begin{proof}
If $\xi$ is not vertical, then  $\dim\phi(\ker F_*)=2m+1-n=0$. Therefore,  $\dim\ker F_*=0$, it is contraction. So $\xi$ is vertical. In this case   $\dim\phi(\ker F_*)=2m-n=0$ and $\dim\ker F_*=1$, Thus $2m=n,  \ker F_*=\Span\{\xi\}$  and by Theorem \ref{pro:1},  $N$ is a Riemannian manifolds.
\end{proof}
\begin{pro} \label{theorem:2}
Let $M(\phi, \xi, \eta , g_{M})$ be a Lorentzian almost (para)contact manifold of dimension $2m+1$ and $(N, g_{N})$ be a semi-Riemannian manifold of dimension $n$ and $F:M(\phi,\xi,\eta, g_{M}) \to (N, g_{N})$ be an anti-invariant semi-Riemannian submersion. If $2m =n$, then $\xi$ is vertical, $\ker F_*=\Span\{\xi\},  \phi (\ker F_{*}) =\{0\}$ and  $N$ is a Riemannian manifolds or  $\xi$ is horizontal and $N$ is a Lorentzian manifolds
\end{pro}
\begin{proof}
If $\xi$ is not vertical, then  $\dim\phi(\ker F_*)=2m+1-n=0$. Therefore,  $\dim\ker F_*=0$, it is contraction. So $\xi$ is vertical. In this case   $\dim\phi(\ker F_*)=2m-n=0$ and $\dim\ker F_*=1$, Thus $2m=n,  \ker F_*=\Span\{\xi\}$  and by Theorem \ref{pro:1},  $N$ is a Riemannian manifolds.
\end{proof}
\begin{pro} \label{theorem:7}
Let $M(\phi, \xi, \eta , g_{M})$ be a Lorentzian almost (para)contact manifold of dimension $2m+1$ and $(N, g_{N})$ is a Lorentzian Riemannian manifold of dimension $n$. Let $F:M(\phi,\xi,\eta, g_{M}) \to (N, g_{N})$ be an anti-invariant semi-Riemannian submersion.  $(\ker F_*)^\perp=\phi(\ker F_*)\oplus\Span\{\xi\}$ if and only if $m+1 = n$.
\end{pro}
\begin{proof}
 Obviously, $\xi$ is horizontal, if $(\ker F_*)^\perp=\phi(\ker F_*)\oplus\Span\{\xi\}$ then  $\dim\phi(\ker F_*)=2m+1-n=n-1$, so  $m+1 = n$. Conversely, by using \eqref{25}, we have $2m+1-n+\dim\mu=n$. So $\dim\mu=1 $ then $\mu=\Span\{\xi\}$. 
\end{proof}
\begin{remark}
We note that Example \ref{exam:mesalanti1} satisfies Proposition \ref{theorem:7}.
\end{remark}
\section{Anti-invariant submersions admitting vertical structure vector field}
In this section, we will study anti-invariant submersions from a  Lorentzian (para) Sasakian manifold onto a
Riemannian manifold such that the characteristic vector field $\xi$ is vertical.
It is easy to see that $ \mu $ is an invariant distribution of $ (\ker F_{*})^\bot $,
under the endomorphism $ \phi $.
Thus, for $ X \in \Gamma ((\ker F_{*})^\bot)$ we write
\begin{align}
\phi X=BX+CX. \label{4.1}
\end{align}
where $BX \in \Gamma (\ker F_{*}), CX \in \Gamma (\mu)$. On the other hand, since $ F_{*} ((\ker F_{*}) ^{\perp})=TN $ and $F$ is a semi-Riemannian submersion, using \eqref{4.1} we derive $g_{N}(F_{*}\phi V, F_{*} CX) = 0$, for every $ X \in \Gamma ((\ker F_{*})^\perp ) , V \in \Gamma (\ker F_{*})$ which implies that
\begin{align}
TN=F_{*} (\phi (\ker F_{*}) ^{\perp}) \oplus F_{*} (\mu). \label{4.2}
\end{align}
\begin{theorem} \label{theorem:2}
Let $M(\phi, \xi, \eta , g_{M})$ be a Lorentzian almost (para)contact manifold of dimension $2m+1$ and $(N, g_{N})$ be a Riemannian manifold of dimension $n$. Let $F:M(\phi,\xi,\eta, g_{M}) \rightarrow (N, g_{N})$ be an anti-invariant semi-Riemannian submersion and $\xi$ is vertical vector field. Then the fibers are not totally umbilical.
\end{theorem}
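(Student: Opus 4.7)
The approach is by contradiction: I assume the fibers are totally umbilical, so that $\T_V W = g_M(V,W)\,H$ for all vertical $V,W$ and some horizontal mean-curvature vector field $H$, and combine this with the (para)Sasakian structural identity $\co_X\xi=\varepsilon\phi X$ to reach an inconsistency. This identity, although stated in \eqref{3.6} only under K-(para)contact, follows on any Lorentzian (para)Sasakian manifold directly from \eqref{3.7}: set $Y=\xi$, use $\phi\xi=0$ and $\eta(\xi)=-\varepsilon$ to get $\phi(\co_X\xi)=X+\varepsilon\eta(X)\xi$, then apply $\phi$ once more, noting that $g(\xi,\xi)=-1$ is constant so $\eta(\co_X\xi)=0$.

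As a preparatory step I would specialize this identity to a vertical $V$. Since $\ker F_*$ is anti-invariant, $\phi V$ lies in $\phi(\ker F_*)\subset (\ker F_*)^{\perp}$ and is therefore horizontal; splitting $\co_V\xi$ via \eqref{2.5} immediately yields
\[
\T_V\xi \;=\; \varepsilon\,\phi V, \qquad \hat{\co}_V\xi = 0.
\]

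The heart of the argument is then to put $V=W=\xi$ in the umbilicity relation. Using $g_M(\xi,\xi)=\varepsilon\eta(\xi)=-1$, this reads $\T_\xi\xi=-H$; but the displayed identity with $V=\xi$ gives $\T_\xi\xi=\varepsilon\phi\xi=0$. Hence $H=0$, so umbilicity in fact collapses to total geodesicity of the fibers. Re-applying the umbilicity (now trivial) relation with $W=\xi$ and arbitrary vertical $V$ then gives $\T_V\xi=0$, and combined with $\T_V\xi=\varepsilon\phi V$ this forces $\phi V=0$ for every $V\in\Gamma(\ker F_*)$. By \eqref{3.1}, $\ker\phi=\Span\{\xi\}$, so $\ker F_*=\Span\{\xi\}$.

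The only real obstacle is this final $1$-dimensional conclusion: a non-null $1$-dimensional submanifold is vacuously totally umbilical, so the theorem has content only when $\dim\ker F_*\ge 2$, an implicit non-triviality hypothesis on the fibers. Granted this, the reduction $\ker F_*=\Span\{\xi\}$ contradicts the assumption and the fibers cannot be totally umbilical. I would conclude by remarking that the argument actually proves the slightly stronger statement that, outside the trivial $\dim\ker F_*=1$ case, the fibers cannot even be totally geodesic.
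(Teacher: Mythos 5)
Your proof is correct and follows essentially the same route as the paper's: obtain $\T_U\xi=\varepsilon\phi U$ from $\nabla_U\xi=\varepsilon\phi U$ and \eqref{2.5}, use $\T_\xi\xi=0$ together with $g_M(\xi,\xi)=-1$ to force $H=0$ under the umbilicity hypothesis, and then contradict $\T_U\xi=\varepsilon\phi U\neq 0$. Your two refinements---deriving $\nabla_X\xi=\varepsilon\phi X$ from the (para)Sasakian identity \eqref{3.7} rather than invoking the K-contact formula \eqref{3.6}, and flagging that the argument genuinely fails when $\ker F_*=\Span\{\xi\}$ (a case compatible with the hypotheses, where the fibers are one-dimensional and in fact totally geodesic)---are both improvements on the paper's version, which tacitly assumes some vertical $U$ with $\phi U\neq 0$.
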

\begin{proof}
From \eqref{2.5}  we have that, for $U\in\Gamma(\ker F_*)$:
%\begin{align*}
$\nabla _{U} \xi =\T _{U} \xi + \V \nabla _{U} \xi .$
%\end{align*}
And from \eqref{3.6} we have
%\begin{align*}
$\nabla _{U} \xi =\varepsilon \phi U .$
%\end{align*}
So, we will have:
\begin{align}
\varepsilon \phi U= \T _{U} \xi .
\end{align}
If the fibers are totally umbilical, then we have $\T_{U} V = g_{M} (U, V )H$ for any vertical vector fields U, V where H is the mean curvature vector field of any fibers. Since $\T_{\xi} \xi = 0$, we have $H = 0$, which shows that fibres are minimal. Hence the fibers
are totally geodesic, which is a contradiction to the fact that $\T_{U} \xi =\varepsilon \phi U \neq 0$.
\end{proof}
\begin{lemma}
Let $F$ be a anti-invariant semi-Riemannian submersion from a Lorentzian (para)Sasakian manifold $M(\phi, \xi, \eta , g_{M})$ onto a Riemannian manifold $(N, g_{N})$. Then we have
\begin{align}
 BCX=0,
 C^2X+\phi BX=\epsilon X,\label{30}\\
 \nabla _{X} Y=  g(X , \phi Y)\xi+\varepsilon\phi \nabla _{X} \phi Y,\label{31}
\end{align}
where $X,Y \in \Gamma ((\ker F_{*}) ^{\perp}) $.
\end{lemma}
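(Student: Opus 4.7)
The plan is to treat the algebraic identities \eqref{30} and the covariant-derivative identity \eqref{31} separately, since the first uses only the algebraic structure $(\phi,\xi,\eta)$ together with the $B$–$C$ splitting, while the second uses the full Lorentzian (para)Sasakian condition.

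For the two algebraic identities, I would start from $\phi X=BX+CX$ and apply $\phi$ once more. On the one hand, \eqref{3.1} gives $\phi^2X=\varepsilon X+\eta(X)\xi$, and since $X\in\Gamma((\ker F_*)^\perp)$ and $\xi$ is vertical, \eqref{3.3} forces $\eta(X)=\varepsilon g(X,\xi)=0$, so $\phi^2X=\varepsilon X$. On the other hand, $\phi(BX)\in\phi(\ker F_*)\subset(\ker F_*)^\perp$ because the submersion is anti-invariant, and $\phi(CX)\in\mu$ because $\mu$ is $\phi$-invariant, so decomposing the latter as $\phi(CX)=B(CX)+C(CX)=BCX+C^2X$ gives
\begin{equation*}
\varepsilon X=\phi BX+BCX+C^2X.
\end{equation*}
Projecting onto $\ker F_*$ (vertical) and $(\ker F_*)^\perp$ (horizontal) separately yields $BCX=0$ and $C^2X+\phi BX=\varepsilon X$, which is exactly \eqref{30}.

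For \eqref{31}, I would plug $Y\in\Gamma((\ker F_*)^\perp)$ into the Lorentzian (para)Sasakian formula \eqref{3.7}. Since $\xi$ is vertical and $Y$ is horizontal, $\eta(Y)=0$, so \eqref{3.7} reduces to $\nabla_X\phi Y-\phi\nabla_XY=g(\phi X,\phi Y)\xi$. Applying $\phi$ to both sides and using $\phi\xi=0$ together with \eqref{3.1} gives
\begin{equation*}
\phi\nabla_X\phi Y=\phi^2\nabla_XY=\varepsilon\nabla_XY+\eta(\nabla_XY)\,\xi,
\end{equation*}
so multiplying by $\varepsilon$ (using $\varepsilon^2=1$) yields $\nabla_XY=\varepsilon\phi\nabla_X\phi Y-\varepsilon\eta(\nabla_XY)\xi$. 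It then remains to identify $-\varepsilon\eta(\nabla_XY)$ with $g(X,\phi Y)$.

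The main obstacle is this last identification, because it requires the K-contact identity $\nabla_X\xi=\varepsilon\phi X$ for Lorentzian (para)Sasakian manifolds. I would first derive it by setting $Y=\xi$ in \eqref{3.7}, using $\phi\xi=0$, $\eta(\xi)=-\varepsilon$ and $\eta(\nabla_X\xi)=0$ (which follows from differentiating the constant $g(\xi,\xi)=-1$). Once $\nabla_X\xi=\varepsilon\phi X$ is in hand, differentiating $g(Y,\xi)=0$ yields $g(\nabla_XY,\xi)=-g(Y,\nabla_X\xi)=-\varepsilon g(Y,\phi X)$, and \eqref{5} converts this into $g(\nabla_XY,\xi)=-g(X,\phi Y)$; hence $\eta(\nabla_XY)=\varepsilon g(\nabla_XY,\xi)=-\varepsilon g(X,\phi Y)$, and substitution finishes the proof of \eqref{31}. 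The delicate point is keeping the $\varepsilon$-bookkeeping consistent so that the argument covers both the Sasakian ($\varepsilon=-1$) and para-Sasakian ($\varepsilon=+1$) cases uniformly.
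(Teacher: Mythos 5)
Your proposal is correct and follows essentially the same route as the paper: the algebraic identities come from applying $\phi$ to $\phi X=BX+CX$, using $\phi^2X=\varepsilon X$ (since $\eta(X)=0$ for horizontal $X$ with $\xi$ vertical) and separating vertical and horizontal components, while \eqref{31} comes from the (para)Sasakian condition \eqref{3.7} together with \eqref{3.1} and $\nabla_X\xi=\varepsilon\phi X$. The only difference is that you rederive \eqref{3.6} from \eqref{3.7}, whereas the paper simply cites it from the preliminaries; your version is more self-contained but not a different argument.
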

\begin{proof}
First, By using \eqref{3.1} and \eqref{4.1} for $X\in\Gamma(\ker F_*)$ we obtain $\epsilon X=BCX+C^2X+\phi BX.$ This proves \eqref{30}. Next, \eqref{31} is obtained from \eqref{3.1}, \eqref{3.6} and \eqref{3.7}.
\end{proof}
\begin{lemma}
Let $F$ be an anti-invariant semi-Riemannian submersion from a Lorentzian (para)Sasakian manifold $M(\phi, \xi, \eta , g_{M})$ onto a Riemannian manifold $(N, g_{N})$. Then we have
\begin{align}
&CX=\varepsilon \A_{X} \xi,\label{4.6}\\
&g_{M} (\A_{X} \xi , \phi U)=0,\label{4.7}\\
&g_{M}(\nabla _{Y} \A_{X} \xi ,\phi U) =-g_{M}(\A_{X} \xi , \phi \A_{Y} U)-\varepsilon \eta(U) g_{M} (\A_{X} \xi ,Y),\label{4.8}\\
&g_{M}(X, \A_{Y} \xi )=\varepsilon g_{M}(Y, \A_{X} \xi),\label{4.9}
\end{align}
where $X,Y \in \Gamma ((\ker F_{*}) ^{\perp})$ and $U \in \Gamma (\ker F_{*})$.
\end{lemma}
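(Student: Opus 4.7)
The four identities are logically ordered: (4.6) is the workhorse, (4.7) and (4.9) are quick consequences, and (4.8) is obtained by differentiating (4.7) together with the (para)Sasakian identity. Throughout I will use that $\xi$ is vertical (guaranteed by Theorem \ref{pro:1} since $N$ is Riemannian), so for any horizontal vector $Y$ we have $\eta(Y)=\varepsilon g(Y,\xi)=0$, and also that $\phi^{2}Y=\varepsilon Y$ on horizontal vectors by \eqref{3.1}.

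\textbf{Step 1 (proof of \eqref{4.6}).} For $X\in\Gamma((\ker F_*)^{\perp})$, write $\phi X=BX+CX$ as in \eqref{4.1}, with $BX$ vertical and $CX\in\Gamma(\mu)$ horizontal. By \eqref{3.6}, $\nabla_{X}\xi=\varepsilon\phi X=\varepsilon BX+\varepsilon CX$. On the other hand, applying \eqref{2.7} to the horizontal field $X$ and the vertical field $\xi$ yields $\nabla_{X}\xi=\A_{X}\xi+\V\nabla_{X}\xi$, where $\A_{X}\xi$ is horizontal. Matching horizontal parts gives $\A_{X}\xi=\varepsilon CX$, hence $CX=\varepsilon\A_{X}\xi$ using $\varepsilon^{2}=1$.

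\textbf{Step 2 (proof of \eqref{4.7} and \eqref{4.9}).} For (4.7), use Step~1 to rewrite $g_{M}(\A_{X}\xi,\phi U)=\varepsilon g_{M}(CX,\phi U)$, which vanishes because $CX\in\Gamma(\mu)$ and $\phi U\in\phi(\ker F_*)$, and these two horizontal subbundles are orthogonal by \eqref{25}. For (4.9), again use Step~1: $g_{M}(X,\A_{Y}\xi)=\varepsilon g_{M}(X,CY)=\varepsilon g_{M}(X,\phi Y-BY)=\varepsilon g_{M}(X,\phi Y)$, since $BY$ is vertical and $X$ horizontal. Symmetrically $g_{M}(Y,\A_{X}\xi)=\varepsilon g_{M}(Y,\phi X)$, and combining these with \eqref{5} (that $g(\phi X,Y)=\varepsilon g(X,\phi Y)$) produces $g_{M}(X,\A_{Y}\xi)=\varepsilon g_{M}(Y,\A_{X}\xi)$.

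\textbf{Step 3 (proof of \eqref{4.8}).} Differentiate the identity \eqref{4.7} along a horizontal field $Y$:
\begin{align*}
g_{M}(\nabla_{Y}\A_{X}\xi,\phi U)=-g_{M}(\A_{X}\xi,\nabla_{Y}\phi U).
\end{align*}
Expand $\nabla_{Y}\phi U=(\nabla_{Y}\phi)U+\phi\nabla_{Y}U$. By the (para)Sasakian formula \eqref{3.7} together with $\phi^{2}Y=\varepsilon Y$ (because $\eta(Y)=0$),
\begin{align*}
(\nabla_{Y}\phi)U=g_{M}(\phi Y,\phi U)\xi+\varepsilon\eta(U)Y.
\end{align*}
Pair with $\A_{X}\xi$: the $\xi$-term drops because $\A_{X}\xi$ is horizontal, leaving $\varepsilon\eta(U)g_{M}(\A_{X}\xi,Y)$. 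For the term $g_{M}(\A_{X}\xi,\phi\nabla_{Y}U)$, split $\nabla_{Y}U=\A_{Y}U+\V\nabla_{Y}U$ via \eqref{2.7}; then $\phi\V\nabla_{Y}U\in\phi(\ker F_*)$ while $\A_{X}\xi=\varepsilon CX\in\mu$, so this piece vanishes by \eqref{25}, leaving only $g_{M}(\A_{X}\xi,\phi\A_{Y}U)$. Assembling the two surviving contributions yields \eqref{4.8}.

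\textbf{Main obstacle.} The only delicate step is \eqref{4.8}: one must correctly identify which components of $\nabla_{Y}\phi U$ survive after pairing with $\A_{X}\xi$. The cleanest bookkeeping relies on the refinement $\A_{X}\xi\in\Gamma(\mu)$ coming from \eqref{4.6}, which is precisely what makes the $\phi\V\nabla_{Y}U$ contribution vanish by orthogonality in \eqref{25}; without that observation the argument would not close.
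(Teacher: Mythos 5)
Your proposal is correct and follows essentially the same route as the paper: \eqref{4.6} by matching the horizontal parts of $\nabla_X\xi=\varepsilon\phi X$ via \eqref{2.7} and \eqref{3.6}, \eqref{4.7} and \eqref{4.9} as direct consequences using the orthogonality in \eqref{25} and identity \eqref{5}, and \eqref{4.8} by differentiating \eqref{4.7} and expanding $\nabla_Y\phi U$ with \eqref{3.7} and \eqref{2.7}, discarding the $\phi(\V\nabla_YU)$ term because $\A_X\xi=\varepsilon CX\in\Gamma(\mu)$. No gaps.
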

\begin{proof}
By using from \eqref{2.7} and \eqref{3.6} for $X\in \Gamma ((\ker F_{*}) ^{\perp})$ and $V=\xi$, the equality \eqref{4.6}  is obvious.
Next, from \eqref{3.2}, \eqref{4.1} and \eqref{4.6}, the equality \eqref{4.7}  is obtained.
Now from \eqref{4.7} for  $X,Y \in \Gamma ((\ker F_{*}) ^{\perp})$, we get
$
g_{M} (\nabla _{Y} \A_{X} \xi , \phi U)+g_{M} (\A_{X} \xi , \nabla _{Y}\phi U)=0$ and $
g_{M} (\A_{X} \xi , \nabla _{Y}\phi U)=g_{M} \big(\A_{X}\xi , (\nabla _{Y} \phi )U \big)+g_{M}\big( \A_{X}\xi ,\phi (\nabla _{Y} U) \big)$.
By using \eqref{3.7} and \eqref{2.7} we obtain
\begin{align*}\begin{split}
g_{M} (\A_{X} \xi , \nabla _{Y}\phi U)=& \varepsilon g_{M}\big( \A_{X}\xi ,\eta (U) Y \big)
+ g_{M} \big(\A_{X}\xi , \phi \A_{Y} U \big)\\&+g_{M}\big( \A_{X}\xi ,\phi (\V \nabla _{Y}U)\big).
\end{split}
\end{align*}
Finally, by using \eqref{4.6}, \eqref{4.8} is obtained.
From \eqref{5}, \eqref{3.6} and \eqref{4.6}, we have \eqref{4.9}.
\end{proof}
\begin{theorem}
Let $F$ be an anti-invariant semi-Riemannian submersion from a Lorentzian (para) Sasakian manifold $M(\phi, \xi, \eta , g_{M})$ onto a Riemannian manifold $(N, g_{N})$, then the following assertions are equivalent to each other:
\begin{enumerate}
\item[$ (i) $]
$ (\ker F_{*})^{\perp} $ is integrable.
\item[$ (ii) $]
$g_{N} \big((\nabla F_{*})(Y,BX), F_{*} \phi V \big)=g_{N} \big((\nabla F_{*})(X,BY), F_{*} \phi V \big)+\varepsilon  g_{M} (A_{X} \xi , \phi A_{Y} V)-\varepsilon g_{M} (A_{Y} \xi , \phi A_{X} V).$
\item[$ (iii) $]
$ g_{M}(A_{X} BY- A_{Y} BX ,\phi V) = \varepsilon g_{M}(A_{X} \xi , \phi A_{Y} U) - \varepsilon g_{M} (A_{Y} \xi ,\phi A_{X} V).$
\end{enumerate}
\end{theorem}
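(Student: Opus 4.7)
My plan is to prove the chain (ii) $\Leftrightarrow$ (iii) and (i) $\Leftrightarrow$ (iii). The first equivalence is essentially formal: since $BX, BY \in \Gamma(\ker F_{*})$, we have $F_{*}(BY)=0$, so \eqref{2.12} combined with \eqref{2.7} (and the fact that $F_{*}$ kills vertical vectors) gives $(\nabla F_{*})(X,BY) = -F_{*}(A_{X}BY)$. Pairing with $F_{*}\phi V$ and using that $F$ is an isometry on the horizontal distribution yields $g_{N}((\nabla F_{*})(X,BY), F_{*}\phi V) = -g_{M}(A_{X}BY, \phi V)$, and similarly for the $(Y,BX)$ term. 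Substituting both into (ii) immediately produces (iii).

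For (i) $\Leftrightarrow$ (iii), I will compute $g_{M}([X,Y],V)$ directly for $V \in \Gamma(\ker F_{*})$, since integrability of $(\ker F_{*})^{\perp}$ is equivalent to this pairing vanishing for every such $V$. Starting from \eqref{31} and antisymmetrising in $X,Y$, together with \eqref{5} and $g(\xi,V) = \varepsilon\eta(V)$, I obtain
\[
g_{M}([X,Y],V) = \varepsilon\bigl[g(X,\phi Y)-g(Y,\phi X)\bigr]\eta(V) + g_{M}(\nabla_{X}\phi Y-\nabla_{Y}\phi X,\phi V).
\]
Next I decompose $\phi Y = BY + CY$. The vertical summand $BY$ contributes $g_{M}(A_{X}BY,\phi V)$ through \eqref{2.7}. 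For the horizontal summand, \eqref{4.6} lets me write $CY = \varepsilon A_{Y}\xi$, and then \eqref{4.8} converts the corresponding term into $-\varepsilon g_{M}(A_{Y}\xi,\phi A_{X}V) - \eta(V) g_{M}(A_{Y}\xi,X)$. Antisymmetrising and simplifying the $\eta(V)$-pieces with help of \eqref{4.9}, the identity collapses to
\[
g_{M}([X,Y],V) = g_{M}(A_{X}BY-A_{Y}BX,\phi V) + \varepsilon g_{M}(A_{X}\xi,\phi A_{Y}V) - \varepsilon g_{M}(A_{Y}\xi,\phi A_{X}V) + R(X,Y)\eta(V),
\]
where $R(X,Y) = (1-\varepsilon)\bigl[\varepsilon g_{M}(X,\phi Y) + g_{M}(A_{X}\xi,Y)\bigr]$.

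The main obstacle is showing $R(X,Y)\equiv 0$: in the para-Sasakian case ($\varepsilon=1$) the prefactor $1-\varepsilon$ already kills it, but in the Sasakian case ($\varepsilon=-1$) I need the supplementary identity $g_{M}(A_{X}\xi,Y) = g_{M}(X,\phi Y)$, which is not recorded among the earlier lemmas. I would establish it by writing $A_{X}\xi = \varepsilon\, CX$ from \eqref{4.6}, decomposing $X,Y$ via \eqref{25} into their $\phi(\ker F_{*})$ and $\mu$ components, exploiting the $\phi$-invariance of $\mu$ to identify $CX$ with $\phi$ applied to the $\mu$-component of $X$, and then invoking \eqref{5}. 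With this in hand, $R(X,Y) = (1-\varepsilon)(1+\varepsilon) g_{M}(X,\phi Y) = (1-\varepsilon^{2}) g_{M}(X,\phi Y) = 0$, so that integrability of $(\ker F_{*})^{\perp}$ is exactly the condition (iii) and the equivalence (i) $\Leftrightarrow$ (iii) follows.
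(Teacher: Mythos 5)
Your proposal is correct and follows essentially the same route as the paper: the paper likewise converts $(\nabla F_*)(X,BY)$ into $-F_*(\A_X BY)$ to pass between (ii) and (iii), and links these to (i) by expanding $g_M([X,Y],V)$ via \eqref{31}, the splitting $\phi Y=BY+CY$ with $CY=\varepsilon \A_Y\xi$ from \eqref{4.6}, and the identities \eqref{4.8}--\eqref{4.9}. The only real difference is that you make explicit the cancellation of the residual $\eta(V)$-terms through the supplementary identity $g_M(\A_X\xi,Y)=g_M(X,\phi Y)$ --- which does hold, and more directly than your sketch suggests, since $g_M(\A_X\xi,Y)=\varepsilon g_M(CX,Y)=\varepsilon g_M(\phi X,Y)$ because $BX$ is vertical --- whereas the paper lets these terms disappear silently between its second and third displays.
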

\begin{proof}
$ (i)\Longleftrightarrow (ii).$ Assume that $U,V\in\Gamma(\ker F_*)$ and $X,Y\in\Gamma((\ker F_*)^\perp).$ From \eqref{31} and \eqref{5}, we obtain.
\begin{align*}
g_{M} ([X,Y],V)=&g_{M} (\nabla _{X}Y , V)-g_{M} (\nabla _{Y} X,V)\\
=&g_{M} (\varepsilon\phi \nabla _{X}\phi Y ,V)+g_{M} \big(g_M(Y,\phi X)\xi ,V \big)\\
&-g_{M}\big(\varepsilon\phi \nabla _{Y} \phi X,V )- g_{M} \big( g_M(X, \phi Y)\xi ,V \big)\\
=&g_{M} ( \nabla _{X}\phi Y ,\phi V)-g_{M}\big( \nabla _{Y} \phi X,\phi V)+(1-\varepsilon)\varepsilon g_{M} (\phi X,Y)\eta(V).
\end{align*}
Now from \eqref{4.1} , \eqref{4.6} and since $ F $ is an anti-invariant submersion, we have
\begin{align*}
g_{M} ([X,Y],V)=&g_{N} (F_{*} \nabla _{X} BY , F_{*}\phi V)+\varepsilon g_{M} (\nabla _{X} \A_{Y} \xi ,\phi V)- g_{N} (F_{*} \nabla _{Y}B X ,F_{*}\phi V)\\
&-\varepsilon g_{M} (\nabla _{Y} \A_{X} \xi ,\phi V)+(1-\varepsilon)g_{M} (\A_{X}\xi,Y)\eta(V).
\end{align*}
On the other hand, According to \eqref{2.12}, \eqref{4.8} and \eqref{4.9} we get
\begin{align}\begin{split}
g_{M} ([X,Y],V)=&-g_{N} (\nabla F_{*}({X}, BY), F_{*}\phi V)+\varepsilon g_{M} (\A_{Y} \xi ,\phi \A_{Y}V)\\
&+g_{N} (\nabla F_{*}({Y},BX) ,F_{*} \phi V)-\varepsilon g_{M} ( \A_{X} \xi , \phi \A _{Y} V)\end{split}
\end{align}
$ (ii)\Longleftrightarrow (iii)$.
By using from \eqref{2.7} , \eqref{2.12} and assume we have
\begin{align*}
g_{N} ( F_{*} \nabla _{Y} BX - \nabla _{X} BY, F_{*} \phi V)=g_{M} ( \A_{Y} BX ,\phi V)- g_{M} (\A_{X} BY, \phi V)
\end{align*}
Thus according to part $ (ii) $, we have
\begin{align}
 g_{M} ( \A _{Y} BX - \A _{X} BY, \phi V)=-\varepsilon g_{M} ( \A _{X} \xi , \phi \A _{Y} V)+\varepsilon g_{M} ( \A _{Y} \xi , \phi \A _{X} V)
\end{align}
\end{proof}
\begin{remark}
If $ \phi (\ker F_{*}) = (\ker F_{*}) ^{\perp}$ then we get $\varepsilon \A_X\xi=CX=0$ and $BX=\phi X$.
\end{remark}
Hence we have the following Corollary.
\begin{corollary}
Let $F:M(\phi,\xi,\eta, g_{M}) \rightarrow (N, g_{N})$ be an anti-invariant semi-Riemannian submersion such that $ \phi (\ker F_{*}) = (\ker F_{*}) ^{\perp}$, where $M(\phi, \xi, \eta , g_{M})$ is a Lorentzian (para) Sasakian manifold and $ (N, g_{N}) $ is a Riemannian manifold. Then for every $X,Y \in \Gamma (\ker F_{*})^{\perp} $, the following assertions are equivalent to each
other;
\begin{enumerate}
\item[$ (i) $]
$ (\ker F_{*})^{\perp} $ is integrable.
\item[$ (ii) $]
$(\nabla F_{*})(Y,\phi X)=(\nabla F_{*})(X, \phi Y)$.
\item[$ (iii) $]
$ A_{X} \phi Y= A_{Y} \phi X $.
\end{enumerate}
\end{corollary}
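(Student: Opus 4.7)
The plan is to specialize the preceding theorem using the preceding remark. The hypothesis $\phi(\ker F_*) = (\ker F_*)^\perp$ puts us in the setting of the remark, so $BX = \phi X$, $CX = 0$, and in particular $A_X\xi = 0$ for every $X \in \Gamma((\ker F_*)^\perp)$; moreover, by the earlier proposition, $\xi$ is vertical. A small preliminary observation that I will need is that $\phi$ sends $(\ker F_*)^\perp$ back into $\ker F_*$: indeed, any horizontal $Y$ can be written as $Y = \phi U$ with $U \in \Gamma(\ker F_*)$, and then $\phi Y = \phi^2 U = \varepsilon U + \eta(U)\xi \in \Gamma(\ker F_*)$ since both $U$ and $\xi$ are vertical.

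With these simplifications I would substitute directly into part (ii) of the previous theorem. Every term involving $A_X\xi$ or $A_Y\xi$ drops out, and $BX$, $BY$ become $\phi X$, $\phi Y$, leaving
\[
g_N\bigl((\nabla F_*)(Y,\phi X),\,F_*\phi V\bigr)=g_N\bigl((\nabla F_*)(X,\phi Y),\,F_*\phi V\bigr)
\]
for every $V \in \Gamma(\ker F_*)$. Because $\phi(\ker F_*) = (\ker F_*)^\perp$ and $F_*$ is an isometry on the horizontal distribution, the vectors $F_*\phi V$ span $T_{F(p)}N$ at each point. Hence this family of scalar identities collapses to the single vector identity $(\nabla F_*)(Y,\phi X) = (\nabla F_*)(X,\phi Y)$, giving the equivalence (i) $\Leftrightarrow$ (ii).

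An identical substitution in part (iii) of the theorem reduces it to $g_M(A_X\phi Y - A_Y\phi X,\,\phi V) = 0$ for all $V \in \Gamma(\ker F_*)$. By the preliminary observation, $\phi X$ and $\phi Y$ are vertical, so $A_X\phi Y$ and $A_Y\phi X$ are horizontal (since $A$ reverses the two distributions). As $\phi V$ ranges over a spanning set of $(\ker F_*)^\perp$, the vanishing of these inner products is equivalent to $A_X\phi Y = A_Y\phi X$, giving (ii) $\Leftrightarrow$ (iii).

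I do not anticipate any genuine obstacle, because the real content lies in the preceding theorem; the corollary is essentially a clean-up that uses the remark to kill the correction terms and uses the spanning properties of $F_*\phi V$ in $TN$ and of $\phi V$ in $(\ker F_*)^\perp$ to strip off the test vectors. Both spanning properties are immediate consequences of the standing hypothesis $\phi(\ker F_*) = (\ker F_*)^\perp$.
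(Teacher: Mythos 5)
Your proposal is correct and follows exactly the route the paper intends: the paper derives this corollary immediately from the preceding remark ($CX=0$, $BX=\phi X$, $\A_X\xi=0$ when $\phi(\ker F_*)=(\ker F_*)^\perp$) applied to parts (ii) and (iii) of the preceding integrability theorem, offering no further proof. Your added details — that $\phi$ maps the horizontal distribution into the vertical one, and that the spanning of $TN$ by $F_*\phi V$ and of $(\ker F_*)^\perp$ by $\phi V$ lets you strip off the test vectors — are exactly the justifications the paper leaves implicit.
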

%\begin{proof}
%$i\Rightarrow ii$:
%Using From \eqref{4.6} and since $ \phi (\ker F_{*}) = (\ker F_{*}) ^{\perp}$ and too $F$ is an anti-invariant submersion, then
%$ C=0 $.
%\begin{align*}
%\Rightarrow g_{N} \big( (\nabla F_{*})(X,BY), F_{*} \phi V \big)=g_{N} \big( (\nabla F_{*})(Y,BX), F_{*} \phi V \big)
%\end{align*}
%\begin{align}
%\Rightarrow (\nabla F_{*})(X,BY)=(\nabla F_{*})(Y,BX).
%\end{align}
%$ ii \Rightarrow iii $\\
%Using from \eqref{4.7} and $ C=0 $, we have
%\begin{align*}
%g_{M} \big( (A_{X} BY- A_{Y} BX, \phi V)=0.
%\end{align*}
%then,
%\begin{align}
%A_{X} BY= A_{Y} BX.
%\end{align}
%\end{proof}
\begin{theorem}
Let $F:M(\phi,\xi,\eta, g_{M}) \rightarrow (N, g_{N})$ be an anti-invariant semi-Riemannian submersion, where $M(\phi, \xi, \eta , g_{M})$ is a Lorentzian (para) Sasakian manifold and $ (N, g_{N}) $ is a Riemannian manifold. Then the following assertions are equivalent to each other;
\begin{enumerate}
\item[$ (i) $]
$ (\ker F_{*})^{\perp} $ defines a totally geodesic foliation on $M$.
\item[$ (ii) $]
$ g_{M}(A_{X} BY, \phi V)= \varepsilon g_{M}(A_{Y} \xi , \phi A_{X} V) $.
\item[$ (iii) $]
$ g_{N}\big((\nabla F_{*})(X,\phi Y),F_{*} \phi V \big)=-\varepsilon g_{M}(A_{Y} \xi , \phi A_{X} V).$
\end{enumerate}
for every $X,Y \in \Gamma ((\ker F_{*}) ^{\perp})$ and $V \in \Gamma (\ker F_{*})$.
\end{theorem}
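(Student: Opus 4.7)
The plan is to reduce the equivalence (i)$\Longleftrightarrow$(ii) to the single identity
\begin{equation*}
g_M(\co_X Y,V)=g_M(\A_X BY,\phi V)-\varepsilon g_M(\A_Y\xi,\phi\A_X V),
\end{equation*}
valid for every $X,Y\in\Gamma((\ker F_*)^{\perp})$ and $V\in\Gamma(\ker F_*)$. Since $(\ker F_*)^{\perp}$ is totally geodesic precisely when $g_M(\co_X Y,V)=0$ for all such triples, this formula immediately yields (i)$\Longleftrightarrow$(ii). To derive it I start from \eqref{31}, use the compatibility relation \eqref{5} to move $\phi$ from $\co_X\phi Y$ onto $V$, and rewrite $g_M(\xi,V)$ as $\varepsilon\eta(V)$.

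Next, I split $\phi Y=BY+CY$ via \eqref{4.1}. For the vertical piece $BY$, formula \eqref{2.7} gives $g_M(\co_X BY,\phi V)=g_M(\A_X BY,\phi V)$, since the remaining vertical part of $\co_X BY$ is orthogonal to the horizontal vector $\phi V$. For the horizontal piece $CY$, \eqref{4.6} identifies $CY=\varepsilon\A_Y\xi$; then \eqref{4.8}, applied with the roles of $X$ and $Y$ swapped, expands $g_M(\co_X\A_Y\xi,\phi V)$ as an $\A$-bracket term plus an $\eta(V)$ contribution. Collecting, two $\eta(V)$ terms appear: one from $g_M(X,\phi Y)\,\xi$ in \eqref{31}, the other from \eqref{4.8}. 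They cancel after observing that
\begin{equation*}
g_M(\A_Y\xi,X)=\varepsilon g_M(CY,X)=\varepsilon g_M(\phi Y,X)=\varepsilon g_M(X,\phi Y),
\end{equation*}
which follows from \eqref{4.6}, the decomposition $\phi Y=BY+CY$ (recall $BY$ vertical and $X$ horizontal), and symmetry of $g_M$.

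The equivalence (ii)$\Longleftrightarrow$(iii) is a short rephrasing through the second fundamental form of $F$. Splitting $\phi Y=BY+CY$ inside $(\co F_*)(X,\phi Y)$, the horizontal summand contributes nothing by \eqref{2.13}, while for the vertical summand $F_*(BY)=0$ reduces \eqref{2.12} to $(\co F_*)(X,BY)=-F_*(\co_X BY)$, and \eqref{2.7} further gives $F_*(\co_X BY)=F_*(\A_X BY)$. Since $F_*$ is an isometry on $(\ker F_*)^{\perp}$, one obtains
\begin{equation*}
g_N\bigl((\co F_*)(X,\phi Y),F_*\phi V\bigr)=-g_M(\A_X BY,\phi V),
\end{equation*}
so (iii) says exactly $g_M(\A_X BY,\phi V)=\varepsilon g_M(\A_Y\xi,\phi\A_X V)$, which is (ii).

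The principal obstacle will be the careful bookkeeping of the two $\eta(V)$ contributions in the first step. Because the total-geodesic condition must hold for \emph{every} vertical $V$, including its $\xi$-component, those two summands have to cancel pointwise; once that cancellation is carried out, every remaining manipulation is purely linear and leans only on \eqref{4.1}, \eqref{2.7}, \eqref{2.12}, \eqref{2.13}, \eqref{31}, \eqref{4.6}, \eqref{4.8}, together with the anti-invariance of $\ker F_*$.
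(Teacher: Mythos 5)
Your proof is correct and follows essentially the same route as the paper's: the same reduction via \eqref{31} and \eqref{5}, the decomposition $\phi Y=BY+CY$ with $CY=\varepsilon\A_Y\xi$ from \eqref{4.6}, the use of \eqref{4.8}, and the identification $(\nabla F_*)(X,\phi Y)=-F_*(\A_X BY)$ for $(ii)\Longleftrightarrow(iii)$. Your explicit check that the two $\eta(V)$ contributions cancel via $g_M(\A_Y\xi,X)=\varepsilon g_M(X,\phi Y)$ is a detail the paper leaves implicit, but the argument is otherwise identical.
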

\begin{proof}$ (i)\Longleftrightarrow (ii).$ Assume that $V\in\Gamma(\ker F_*)$ and $X,Y\in\Gamma((\ker F_*)^\perp).$
By using \eqref{31} we have
\begin{align}
g_M(\nabla _{X} Y, V)=g_M(\nabla _{X} \phi Y, \phi V)+ \varepsilon \eta (V)g_M(X, \phi Y),\label{44}
\end{align}
and from \eqref{2.7}, \eqref{4.1} we have
\begin{align}
g_M(\nabla _{X} \phi Y, \phi V)=g_M(\A _{X} B Y, \phi V)+\varepsilon g_M(\nabla _{X} \A _{Y} \xi,\phi V),\label{45}
\end{align}
and too from \eqref{4.8} we have
\begin{align}
g_M(\nabla _{X} \phi Y,\phi V)=g_M(\A _{X} B Y, \phi V)-\varepsilon g_M( \A _{Y} \xi ,\phi \A _{X}V )-\eta(V) g_M(\A _{Y} \xi ,X) \label{46}
\end{align}
Now, from \eqref{4.1}, \eqref{4.6}, \eqref{44}, \eqref{45} and \eqref{46}, $ (\ker F_{*})^{\perp} $ is a totally geodesic foliation on $M$ if and only if
\begin{align}
g_{M}(\A _{X} B Y, \phi V)=\varepsilon g_{M}(\A _{Y} \xi, \phi \A _{X} V).
\end{align}
Finely,
By using from \eqref{2.12},\eqref{2.13}, \eqref{4.1}, \eqref{4.2} and \eqref{46} we have $ (ii)\Longleftrightarrow (iii).$
\end{proof}
\begin{corollary}
Let $F:M(\phi,\xi,\eta, g_{M}) \rightarrow (N, g_{N})$ be an anti-invariant semi-Riemannian submersion such that $ \phi (\ker F_{*}) = (\ker F_{*}) ^{\perp}$, where $M(\phi, \xi, \eta , g_{M})$ is a Lorentzian (para) Sasakian manifold and $ (N, g_{N}) $ is a Riemannian manifold. Then the  following assertions are equivalent to each other;
\begin{enumerate}
\item[$ (i) $]
$ (\ker F_{*})^{\perp} $ defines a totally geodesic foliation on M.
\item[$ (ii) $]
$ A_{X} \phi Y=0$.
\item[$ (iii) $]
$(\nabla F_{*})(X,\phi Y)=0$.
\end{enumerate}
for every $X,Y \in \Gamma ((\ker F_{*}) ^{\perp})$ and $V \in \Gamma (\ker F_{*})$.
\end{corollary}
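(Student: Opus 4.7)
The plan is to specialize the preceding theorem, which supplies three equivalent characterizations of when $(\ker F_{*})^{\perp}$ defines a totally geodesic foliation, to the stronger hypothesis $\phi(\ker F_{*}) = (\ker F_{*})^{\perp}$. The preparatory remark records that under this hypothesis one has $CX = \varepsilon\A_{X}\xi = 0$ and $BX = \phi X$ for every $X \in \Gamma((\ker F_{*})^{\perp})$; equivalently $\A_{X}\xi = 0$ and $\phi Y \in \Gamma(\ker F_{*})$ whenever $Y \in \Gamma((\ker F_{*})^{\perp})$. All three steps below consist of substituting these identities into the corresponding clauses of the previous theorem.

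First I would handle (ii). Inserting $BY = \phi Y$ and $\A_{Y}\xi = 0$ into condition (ii) of the theorem, namely $g_{M}(\A_{X}BY,\phi V) = \varepsilon g_{M}(\A_{Y}\xi,\phi\A_{X}V)$, reduces it to $g_{M}(\A_{X}\phi Y,\phi V) = 0$ for all $V \in \Gamma(\ker F_{*})$. Because $\phi$ maps $\ker F_{*}$ onto $(\ker F_{*})^{\perp}$ by hypothesis, the vectors $\phi V$ range over a spanning set of $(\ker F_{*})^{\perp}$. Since $\phi Y$ is vertical and $X$ is horizontal, equation \eqref{2.7} shows that $\A_{X}\phi Y$ is horizontal; orthogonality to a spanning set of $(\ker F_{*})^{\perp}$ then forces $\A_{X}\phi Y = 0$. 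This upgrades the reduced orthogonality condition to the pointwise vanishing stated in (ii) of the corollary.

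Next I would treat (iii). Since $\phi Y$ is vertical we have $F_{*}(\phi Y) = 0$, so the definition \eqref{2.12} together with \eqref{2.7} gives
\begin{align*}
(\nabla F_{*})(X,\phi Y) = -F_{*}(\co_{X}\phi Y) = -F_{*}(\A_{X}\phi Y),
\end{align*}
the last equality because the vertical part of $\co_{X}\phi Y$ is killed by $F_{*}$. Substituting $\A_{Y}\xi = 0$ into condition (iii) of the theorem reduces its right-hand side to zero, and since $F_{*}$ is a linear isometry from $(\ker F_{*})^{\perp}$ onto $TN$, the equation $(\nabla F_{*})(X,\phi Y) = 0$ is equivalent to $\A_{X}\phi Y = 0$. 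This closes the chain of equivalences $(i)\Longleftrightarrow(ii)\Longleftrightarrow(iii)$.

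The argument is essentially bookkeeping, so there is no substantial obstacle; the only point requiring attention is justifying the passage from orthogonality against every $\phi V$ to the vanishing of $\A_{X}\phi Y$, which depends precisely on the hypothesis $\phi(\ker F_{*}) = (\ker F_{*})^{\perp}$ and on the horizontality of $\A_{X}\phi Y$. Both error terms in the theorem involve $\A_{Y}\xi$ and drop out simultaneously, so once the remark is invoked the corollary becomes a clean specialization.
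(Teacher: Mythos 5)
Your proposal is correct and follows exactly the route the paper intends: the corollary is stated without proof as the specialization of the preceding theorem via the remark that $\phi(\ker F_{*})=(\ker F_{*})^{\perp}$ forces $CX=\varepsilon\A_{X}\xi=0$ and $BX=\phi X$. Your added justifications — that $\A_{X}\phi Y$ is horizontal and the vectors $\phi V$ span the (positive definite) horizontal distribution, so orthogonality upgrades to vanishing — are precisely the details the paper leaves implicit, and they are sound.
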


\noindent
We note that a differentiable map $F$ between two semi-Riemannian manifolds is called totally geodesic if $ \nabla F_{*} =0 $. Using Theorem \ref{theorem:2} one can easily prove that the fibers are not totally geodesic. Hence we have the following Theorem.
\begin{theorem}
Let $F:M(\phi,\xi,\eta, g_{M}) \to  (N, g_{N})$ be an anti-invariant semi-Riemannian submersion such that $ \phi (\ker F_{*}) = (\ker F_{*}) ^{\perp}$, where $M(\phi, \xi, \eta , g_{M})$ is a Lorentzian (para) Sasakian manifold and $ (N, g_{N}) $ is a Riemannian manifold. Then $F$ is not totally geodesic map.
\end{theorem}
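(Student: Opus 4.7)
The plan is to argue by contradiction, isolating the single pair of vertical vectors on which the second fundamental form $\nabla F_*$ cannot vanish, namely $(U,\xi)$. The hypothesis $\phi(\ker F_*)=(\ker F_*)^\perp$ together with Proposition \ref{theorem:1} forces $\xi$ to be vertical and $m=n$, so in particular $\ker F_*$ is a nontrivial distribution on which $\phi$ acts injectively (apart from $\xi$). This is precisely the setting in which the obstruction extracted in the proof of Theorem \ref{theorem:2} applies.

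First I would suppose, toward a contradiction, that $F$ is totally geodesic, i.e.\ $\nabla F_*\equiv 0$. For any $U\in\Gamma(\ker F_*)$, applying \eqref{2.12} to the pair $(U,\xi)$ gives
\begin{align*}
(\nabla F_*)(U,\xi)=\nabla^F_U F_*(\xi)-F_*(\nabla_U\xi)=-F_*(\nabla_U\xi),
\end{align*}
since $F_*(\xi)=0$ (as $\xi$ is vertical). Next I would substitute the K-contact identity \eqref{3.6}, exactly as in the proof of Theorem \ref{theorem:2}, to obtain $\nabla_U\xi=\varepsilon\phi U$, so that
\begin{align*}
(\nabla F_*)(U,\xi)=-\varepsilon F_*(\phi U).
\end{align*}

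The key step is then to pick $U\in\Gamma(\ker F_*)$ with $\phi U\neq 0$ and observe that $F_*(\phi U)\neq 0$. Such a $U$ exists: by Proposition \ref{theorem:1} we have $m=n\geq 1$ and $\phi(\ker F_*)=(\ker F_*)^\perp$ has positive dimension, so at least one vertical vector is not annihilated by $\phi$. Since $\phi U\in(\ker F_*)^\perp$ and $F_*|_{(\ker F_*)^\perp}$ is a linear isometry onto $TN$ (by definition of semi-Riemannian submersion), the image $F_*(\phi U)$ is nonzero. Hence $(\nabla F_*)(U,\xi)\neq 0$, contradicting the hypothesis $\nabla F_*\equiv 0$.

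The main potential obstacle would be subtleties coming from the possibly degenerate signature, but here the Riemannian target sidesteps this: $\phi(\ker F_*)$ is horizontal, $F_*$ is an isometry on the horizontal distribution, so injectivity of $F_*$ on $\phi(\ker F_*)$ is immediate and no case analysis in $\varepsilon$ is needed. The whole argument reduces to a one-line computation feeding $\T_U\xi=\varepsilon\phi U$ into the definition of the second fundamental form of $F$.
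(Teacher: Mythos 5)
Your proof is correct and follows essentially the same route as the paper: the paper deduces the result from the identity $\T_U\xi=\varepsilon\phi U\neq 0$ established in Theorem \ref{theorem:2} (so the fibers are not totally geodesic, hence $F$ cannot be a totally geodesic map), and your direct computation $(\nabla F_*)(U,\xi)=-\varepsilon F_*(\phi U)\neq 0$ is just that same obstruction unwound through the definition of the second fundamental form. Your explicit check that $F_*$ is injective on $\phi(\ker F_*)\subseteq(\ker F_*)^\perp$ makes the final nonvanishing step cleaner than the paper's terse remark, but it is not a different argument.
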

\noindent
Finally, we give a necessary and sufficient condition for an anti-invariant Riemannian
submersion to be harmonic.
\begin{theorem}
Let $F:M(\phi,\xi,\eta, g_{M}) \to (N, g_{N})$ be an anti-invariant semi-Riemannian submersion such that
 %$ \phi (\ker F_{*}) = (\ker F_{*}) ^{\perp}$
 $m=n$, where $M(\phi, \xi, \eta , g_{M})$ is a Lorentzian (para) Sasakian manifold of dimension $2m+1$ and $ (N, g_{N}) $ is a Riemannian manifold of dimension $n$. Then $F$ is harmonic if and only if $\tr \phi (\T _{V}) =-n \eta (V )$, where  $V \in \Gamma (\ker F_{*})$.
\end{theorem}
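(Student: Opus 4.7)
The plan is to reduce both ``$F$ harmonic'' and ``$\tr\phi(\T_{V})=-n\eta(V)$'' to the vanishing of the single horizontal vector $H:=\sum_{i=1}^{m}\T_{e_{i}}e_{i}$. First I would invoke Corollary~\ref{cor:1} together with Proposition~\ref{theorem:1} to convert the hypothesis $m=n$ into $\phi(\ker F_{*})=(\ker F_{*})^{\perp}$, $\xi\in\Gamma(\ker F_{*})$, and $N$ Riemannian. Then I pick an orthonormal basis $\{e_{1},\dots,e_{m},\xi\}$ of $\ker F_{*}$ with $e_{i}\perp\xi$; by \eqref{3.2} and $\eta(e_{i})=0$ the horizontal vectors $H_{i}:=\phi e_{i}$ are orthonormal, and the signature signs are $\epsilon_{i}=1$ for $i\le m$ while $\epsilon_{m+1}=g_{M}(\xi,\xi)=-1$.

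For the tension field, \eqref{2.13} kills horizontal contributions and \eqref{2.5} together with \eqref{2.12} give $(\nabla F_{*})(U,U)=-F_{*}(\T_{U}U)$ for vertical $U$. Relation \eqref{3.6} yields $\T_{\xi}\xi=\varepsilon\phi\xi=0$, which kills the timelike summand, so
\[
\tau(F)=-\sum_{i=1}^{m}F_{*}(\T_{e_{i}}e_{i})=-F_{*}(H).
\]
Since $F_{*}$ is an isometry on $(\ker F_{*})^{\perp}$, harmonicity is equivalent to $H=0$.

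The heart of the argument, and the step I expect to take the most care, is the identity
\[
\T_{e_{i}}H_{j}=\delta_{ij}\,\xi+\phi\T_{e_{i}}e_{j},
\]
obtained by expanding $\nabla_{e_{i}}(\phi e_{j})=(\nabla_{e_{i}}\phi)e_{j}+\phi\nabla_{e_{i}}e_{j}$, applying the Sasakian formula \eqref{3.7} (together with $\eta(e_{j})=0$ and $g(\phi e_{i},\phi e_{j})=\delta_{ij}$), decomposing via \eqref{2.5}, and observing that $\phi$ sends the vertical $\hat\nabla_{e_{i}}e_{j}$ into $(\ker F_{*})^{\perp}$ while returning $\T_{e_{i}}e_{j}\in\phi(\ker F_{*})$ to $\ker F_{*}$. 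Granted this identity, the trace $\tr\phi(\T_{V}):=\sum_{i=1}^{m+1}\epsilon_{i}g_{M}(\phi\T_{V}e_{i},e_{i})$ is computed as follows: the $\xi$-summand vanishes because $\T_{V}\xi=\varepsilon\phi V$ and $\phi^{2}V=\varepsilon V+\eta(V)\xi$ yield $g_{M}(\phi\T_{V}\xi,\xi)=\varepsilon\eta(V)+\varepsilon\eta(V)g_{M}(\xi,\xi)=0$, while for $i\le m$ the symmetry \eqref{2.3}, compatibility \eqref{5}, and skew-symmetry \eqref{2.10}, combined with the identity above, convert each summand into $-\eta(V)-g_{M}(\phi V,\T_{e_{i}}e_{i})$. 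Summation then gives
\[
\tr\phi(\T_{V})=-n\,\eta(V)-g_{M}(\phi V,H).
\]
Since $\phi(\ker F_{*})=(\ker F_{*})^{\perp}$ spans the full horizontal distribution, the condition $g_{M}(\phi V,H)=0$ for every vertical $V$ is equivalent to $H=0$, which finishes the equivalence.
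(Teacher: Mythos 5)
Your proposal is correct and follows essentially the same route as the paper: reduce harmonicity to the vanishing of $\sum_i\T_{e_i}e_i$, derive from the Sasakian condition \eqref{3.7} the identity $\T_{e_i}\phi e_j=\delta_{ij}\xi+\phi\T_{e_i}e_j$ (the paper's \eqref{4.14} in inner-product form), and sum to get $\tr\phi(\T_V)=-n\eta(V)-g_M(\phi V,\sum_i\T_{e_i}e_i)$. Your write-up is in fact slightly more careful than the paper's, since you track the signature of $\xi$ explicitly and justify both directions of the equivalence via $\phi(\ker F_*)=(\ker F_*)^\perp$.
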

\begin{proof}
We know that $ F $ is harmonic if and only if $ F $ has minimal fibres\cite{Eells}. Thus $F$ is harmonic if and only if
$\sum _{i=1} ^{k} \T _{e_{i}} e_{i}=0$, where $\{e_1,\ldots,e_{k-1},e_k=\xi\}$ is the orthonormal basis for $\ker F_*$ and  $k = 2m+1-n=n+1$ is dimension of $\ker F_{*} $.\\
On the other hand, from \eqref{2.5}, \eqref{2.6} and \eqref{3.7} we get
\begin{align}
g_M(\T _{V} \phi W,U)=\varepsilon g_M(\phi V, \phi W)\eta(U)+\eta(W)g_M(\phi^2V,U)+\varepsilon g_M(\T _{V} W,\phi U).\label{4.14}
\end{align}
By using \eqref{4.14} and \eqref{2.10} we get
\begin{align}
 -\varepsilon\sum _{i=1} ^{k} g_{M} ( e_{i} , \phi \T _{e_{i}}U)=\varepsilon \big((k-1)\eta(U)+g_{M} (\sum _{i=1} ^{k}\T _{e_{i}} e_{i} , \phi U)\big).
\end{align}
Since $ F $ is a Harmonic maping, $ \sum _{i=1} ^{k} (\T e_{i} e_{i} ,\phi U)=0 $. Then we have
 \begin{align}
 \tr \phi (\T _{U})=\sum _{i=1} ^{k} g_{M} (e_{i}, \phi\T _{e_{i}}U)=-n\eta(U).
\end{align}
\end{proof}
\section{Anti-invariant submersions admitting horizontal structure vector field}
In this section, we will study anti-invariant submersions from a  Lorentzian (para) Sasakian manifold onto a
Lorentzian manifold such that the characteristic vector field $\xi$ is horizontal. From \eqref{25},
it is easy to see that $\phi(\mu)\subset\mu$ and $\xi\in\mu$.
Thus, for $ X \in \Gamma ((\ker F_{*})^\bot)$ we write
\begin{align}
\phi X=BX+CX. \label{44.1}
\end{align}
where $BX \in \Gamma (\ker F_{*}), CX \in \Gamma (\mu)$. On the other hand, since $ F_{*} ((\ker F_{*}) ^{\perp})=TN $ and $F$ is a semi-Riemannian submersion, using \eqref{44.1} we derive $g_{N}(F_{*}\phi V, F_{*} CX) = 0$, for every $ X \in \Gamma ((\ker F_{*})^\perp ) , V \in \Gamma (\ker F_{*})$ which implies that
\begin{align}
TN=F_{*} (\phi (\ker F_{*})) \oplus F_{*} (\mu). \label{44.2}
\end{align}

%From \eqref{3.1} and \eqref{5.2} we obtain following Lemma.
\begin{lemma}
Let $F$ be an anti-invariant semi-Riemannian submersion from a Lorentzian (para) Sasakian manifold $M(\phi, \xi, \eta , g_{M})$ onto a Lorentzian  manifold $(N, g_{N})$. Then we have
\begin{align}
&BX=\varepsilon A_{X} \xi,\label{5.6}\\
& \T _{U} \xi =0, \label{5.7}\\
& g_{M}(\nabla _{X} CY, \phi U)=- g_{M}(CY, \phi \A_{X} U),\label{5.8}
\end{align}
where $X,Y \in \Gamma ((\ker F_{*}) ^{\perp})$ and $U \in \Gamma (\ker F_{*})$.
\end{lemma}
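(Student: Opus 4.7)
The plan is to establish each of the three identities by combining the K-(para)contact relation $\co_Z\xi=\varepsilon\phi Z$ from \eqref{3.6} with the fundamental decompositions \eqref{2.6}--\eqref{2.8} and the Sasakian identity \eqref{3.7}. Throughout I would use the standing observation that, since $\xi$ is horizontal, $\eta(U)=\varepsilon g_M(U,\xi)=0$ for every $U\in\Gamma(\ker F_*)$, and correspondingly $g_M(X,U)=0$ for $X$ horizontal.

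For \eqref{5.6}, I would start from $\co_X\xi=\varepsilon\phi X$, write the right-hand side as $\varepsilon(BX+CX)$ according to \eqref{44.1}, and expand the left-hand side using \eqref{2.8} (since both $X$ and $\xi$ are horizontal) as $\h\co_X\xi+\A_X\xi$. Because $\A_X$ reverses distributions, $\A_X\xi$ is vertical while $CX\in\mu$ is horizontal; matching the vertical components yields $\A_X\xi=\varepsilon BX$, hence $BX=\varepsilon\A_X\xi$.

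For \eqref{5.7}, the argument is parallel. Apply \eqref{2.6} to $\co_U\xi$ (with $U$ vertical and $\xi$ horizontal) to get $\co_U\xi=\h\co_U\xi+\T_U\xi$, where $\T_U\xi$ is the vertical component. Since $F$ is anti-invariant, $\phi U\in\phi(\ker F_*)\subset(\ker F_*)^\perp$ is horizontal, so the identity $\co_U\xi=\varepsilon\phi U$ forces the vertical part on the left to vanish, giving $\T_U\xi=0$.

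For \eqref{5.8}, the starting point is the orthogonality $g_M(CY,\phi U)=0$, valid because $CY\in\mu$ and $\phi U\in\phi(\ker F_*)$ are orthogonal complements inside $(\ker F_*)^\perp$. Differentiating along $X$,
\begin{equation*}
0=g_M(\co_X CY,\phi U)+g_M(CY,\co_X\phi U),
\end{equation*}
so it remains to show $g_M(CY,\co_X\phi U)=g_M(CY,\phi\A_X U)$. Split $\co_X\phi U=(\co_X\phi)U+\phi\co_X U$. The Sasakian formula \eqref{3.7} gives $(\co_X\phi)U=g_M(\phi X,\phi U)\xi+\eta(U)\phi^2 X$; both terms die, since $\eta(U)=0$ and \eqref{3.2} yields $g_M(\phi X,\phi U)=g_M(X,U)+\eta(X)\eta(U)=0$. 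Next, by \eqref{2.7}, $\co_X U=\A_X U+\V\co_X U$, and the vertical piece satisfies $\phi(\V\co_X U)\in\phi(\ker F_*)\perp\mu$, so $g_M(CY,\phi\V\co_X U)=0$. What survives is precisely $g_M(CY,\phi\A_X U)$, which completes the identity. The main obstacle, and the only nontrivial observation, is the dual vanishing $g_M(\phi X,\phi U)=0$ coming from the horizontality of $\xi$; once this is recognized, all three identities are short.
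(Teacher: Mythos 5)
Your proposal is correct and follows essentially the same route as the paper: \eqref{5.6} from \eqref{2.8} and \eqref{3.6} by comparing vertical parts, \eqref{5.7} from \eqref{2.6} and \eqref{3.6} using that $\phi U$ is horizontal by anti-invariance, and \eqref{5.8} from \eqref{3.7} and \eqref{2.7} after differentiating the orthogonality $g_M(CY,\phi U)=0$. The paper's proof is just a terser version of the same argument, and your observation that $(\nabla_X\phi)U=0$ because $\eta(U)=0$ and $g_M(X,U)=0$ is exactly the point it leaves implicit.
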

\begin{proof}
Assume that $X,Y \in \Gamma ((\ker F_{*}) ^{\perp})$ and $U \in \Gamma (\ker F_{*})$.
By using from \eqref{2.8} and \eqref{3.6}, we have
\begin{align}
BX=\varepsilon \A_{X} \xi,
\end{align}
and also from \eqref{2.6} and \eqref{3.6} we get
\begin{align}
\T _{U} \xi =0.
\end{align}
From \eqref{3.7} and  \eqref{2.7}, we obtain  \eqref{5.8}.
\end{proof}
\begin{theorem}\label{th:3.4}
Let $F$ be an anti-invariant semi-Riemannian submersion from a Lorentzian (para) Sasakian manifold $M(\phi, \xi, \eta , g_{M})$ onto a Lorentzian manifold $(N, g_{N})$.  Then the following assertions are equivalent.
\begin{enumerate}
\item[$ (i) $]
$ (\ker F_{*})^{\perp} $ is integrable.
\item[$ (ii) $]
\begin{align*}
g_{N} \big((\nabla F_{*})(Y,BX), F_{*} \phi V \big)=g_{N} \big((\nabla F_{*})(X,BY), F_{*} \phi V \big)- g_{M} (CX, \phi \A_{Y} V)\\
+  g_{M} (CY, \phi \A_{X} V)+\varepsilon g_{M} (X,\phi V) \eta(Y) - \varepsilon g_{M} (Y,\phi V) \eta (X).
\end{align*}
\item[$ (iii) $]
\begin{align*}g_{M}(\A_{X} \A_{Y} \xi - \A_{Y} \A_{X} \xi ,\phi V) =&- g_{M} (CX, \phi \A_{Y} V)+  g_{M} (CY, \phi \A_{X} V)\\
&+\varepsilon g_{M} (X,\phi V) \eta(Y) - \varepsilon g_{M} (Y,\phi V) \eta (X).\end{align*}
\end{enumerate}
for all $X,Y \in \Gamma ((\ker F_{*}) ^{\perp})$ and $V \in \Gamma (\ker F_{*})$.
\end{theorem}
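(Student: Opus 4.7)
The plan is to chain $(i) \Rightarrow (ii) \Rightarrow (iii) \Rightarrow (i)$ by working directly with $g_M([X,Y], V)$ for $X, Y \in \Gamma((\ker F_*)^\perp)$ and $V \in \Gamma(\ker F_*)$. Since $\xi$ is horizontal and $V$ vertical, $\eta(V) = \varepsilon g_M(V, \xi) = 0$, so \eqref{3.1} gives $V = \varepsilon \phi^2 V$, and then \eqref{5} yields $g_M(\nabla_X Y, V) = g_M(\phi \nabla_X Y, \phi V)$. Substituting from \eqref{3.7} and using $\phi^2 X = \varepsilon X + \eta(X) \xi$ together with $g_M(\xi, \phi V) = \varepsilon \eta(\phi V) = 0$ produces, after antisymmetrization,
\begin{align*}
g_M([X,Y], V) &= g_M(\nabla_X \phi Y, \phi V) - g_M(\nabla_Y \phi X, \phi V) \\
&\quad - \varepsilon \eta(Y) g_M(X, \phi V) + \varepsilon \eta(X) g_M(Y, \phi V).
\end{align*}
This is the master identity; the two equivalences amount to rewriting its right-hand side in two different ways.

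For $(i) \Leftrightarrow (ii)$, I decompose $\phi Y = BY + CY$ as in \eqref{44.1}. Since $BY \in \Gamma(\ker F_*)$ we have $F_* BY = 0$, hence $(\nabla F_*)(X, BY) = -F_*(\nabla_X BY)$; combined with \eqref{2.7} and the fact that $\phi V$ is horizontal, this yields
\[
g_N((\nabla F_*)(X, BY), F_* \phi V) = -g_M(\A_X BY, \phi V) = -g_M(\nabla_X BY, \phi V).
\]
For the $CY$ part, \eqref{5.8} immediately supplies $g_M(\nabla_X CY, \phi V) = -g_M(CY, \phi \A_X V)$. Substituting these (and their $X \leftrightarrow Y$ counterparts) into the master identity and demanding vanishing for every $V$ reproduces exactly the identity (ii).

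For $(ii) \Leftrightarrow (iii)$, I replace $BY$ by $\varepsilon \A_Y \xi$ using \eqref{5.6}. Since $\A_Y \xi$ is vertical, $(\nabla F_*)(X, BY) = -\varepsilon F_*(\nabla_X \A_Y \xi)$, and by \eqref{2.7} the horizontal part of $\nabla_X \A_Y \xi$ is $\A_X \A_Y \xi$, so
\[
g_N((\nabla F_*)(X, BY), F_* \phi V) = -\varepsilon g_M(\A_X \A_Y \xi, \phi V),
\]
with the analogous identity after swapping $X$ and $Y$. Plugging these into (ii) and using $\varepsilon^2 = 1$ rearranges to (iii). The main obstacle is the careful bookkeeping of the $\eta$- and $\varepsilon$-terms produced by \eqref{3.7}: in contrast to Section 4, where $\xi$ is vertical and hence $\eta(X) = \eta(Y) = 0$ for horizontal $X, Y$, here $\xi$ is horizontal so $\eta(X), \eta(Y)$ may be nonzero, and these contributions are precisely what yield the extra boundary terms $\pm\varepsilon g_M(\cdot, \phi V)\eta(\cdot)$ appearing in (ii) and (iii).
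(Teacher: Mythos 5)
Your proposal is correct and follows essentially the same route as the paper: both derive the identity $g_M([X,Y],V)=g_M(\nabla_X\phi Y,\phi V)-g_M(\nabla_Y\phi X,\phi V)-\varepsilon\eta(Y)g_M(X,\phi V)+\varepsilon\eta(X)g_M(Y,\phi V)$ from \eqref{3.7} and \eqref{5}, split $\phi Y=BY+CY$, handle the $B$-part via $(\nabla F_*)(X,BY)=-F_*(\nabla_X BY)$ together with \eqref{2.7} and the $C$-part via \eqref{5.8}, and then pass to (iii) through $BX=\varepsilon\A_X\xi$. The only remark worth making is that your final substitution actually yields (iii) with an overall factor $\varepsilon$ attached to the left-hand side (equivalently, to the $C$-terms), a sign-bookkeeping discrepancy that is already present in the paper's own statement and proof.
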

\begin{proof}
Assume that $X,Y\in \Gamma ((\ker F_{*}) ^{\perp})$ and $V \in \Gamma (\ker F_{*})$. From \eqref{3.2}, \eqref{3.7} and \eqref{5}, we obtain.
\begin{align*}
g_{M} ([X,Y],V)=&g_{M} (\nabla _{X}Y , V)-g_{M} (\nabla _{Y} X,V)\\
=&g_{M} ( \nabla _{X}\phi Y ,\phi V)-\varepsilon\eta(Y)g_M(X,\phi V)\\
&-g_{M} ( \nabla _{Y}\phi X ,\phi V)+\varepsilon\eta(X)g_M(Y,\phi V)\\
=&g_{M} ( \nabla _{X}B Y ,\phi V)+g_{M} ( \nabla _{X}C Y ,\phi V)-\varepsilon\eta(Y)g_M(X,\phi V)\\
&-g_{M} ( \nabla _{Y}B X ,\phi V)-g_{M} ( \nabla _{Y}C X ,\phi V)+\varepsilon\eta(X)g_M(Y,\phi V).
\end{align*}
Since $ F $ is an anti-invariant submersion, we have
\begin{align*}
g_{M} ([X,Y],V)=&g_{N} ( F_*\nabla _{X}B Y ,F_*\phi V)+g_{M} ( \nabla _{X}C Y ,\phi V)-\varepsilon\eta(Y)g_M(X,\phi V)\\
&-g_{N} ( F_*\nabla _{Y}B X ,F_*\phi V)-g_{M} ( \nabla _{Y}C X ,\phi V)+\varepsilon\eta(X)g_M(Y,\phi V).
\end{align*}
On the other hand, according to \eqref{2.12}, \eqref{5.8} and \eqref{4.9} we get
\begin{align}\begin{split}
g_{M} ([X,Y],V)=&-g_{N} (\nabla F_{*}({X}, BY), F_{*}\phi V)- g_{M} (C Y ,\phi\A_X V)-\varepsilon\eta(Y)g_M(X,\phi V)\\
&+g_{N} (\nabla F_{*}({Y},BX) ,F_{*} \phi V)+ g_{M} (C X ,\phi\A_Y V)+\varepsilon\eta(X)g_M(Y,\phi V)\end{split}
\end{align}
which proves $ (i)\Longleftrightarrow (ii).$
By using from \eqref{2.7} , \eqref{2.12} and assume we have
\begin{align*}
g_{N} ( F_{*} \nabla _{Y} BX - \nabla _{X} BY, F_{*} \phi V)=-(g_{M} ( \A_{Y} BX ,\phi V)- g_{M} (\A_{X} BY, \phi V))
\end{align*}
Thus according to part $ (ii) $, we have $ (ii)\Longleftrightarrow (iii).$
\end{proof}
\begin{cor}\label{cor:3.5}
Let $F$ be an anti-invariant semi-Riemannian submersion from a Lorentzian (para) Sasakian manifold $M(\phi, \xi, \eta , g_{M})$ onto a Lorentzian manifold $(N, g_{N})$ with $(\ker F_*)^\perp=\phi(\ker F_*)\oplus\Span\{\xi\}$. Then the following assertions are equivalent.
\begin{enumerate}
\item[$ (i) $]
$ (\ker F_{*})^{\perp} $ is integrable.
\item[$ (ii) $]
%\begin{align*}
$(\nabla F_{*})(Y,BX)=(\nabla F_{*})(X,BY)+\varepsilon\eta(Y)F_*X - \varepsilon\eta(X) F_*Y.$
%\end{align*}
\item[$ (iii) $]
$\A_{X}\A_{Y}\xi -\A_{Y}\A_{X}\xi =\varepsilon\eta(Y)X-\varepsilon\eta (X)Y.$
\end{enumerate}
for all $X,Y \in \Gamma ((\ker F_{*}) ^{\perp})$ and $V \in \Gamma (\ker F_{*})$.
\end{cor}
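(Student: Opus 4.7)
My plan is to specialize Theorem~\ref{th:3.4} under the extra hypothesis $(\ker F_*)^\perp = \phi(\ker F_*) \oplus \Span\{\xi\}$, which is equivalent to $\mu = \Span\{\xi\}$, and then upgrade the inner-product forms of (ii) and (iii) in that theorem to the vector identities demanded by the corollary.

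First I would record the key simplification: under this hypothesis one has $CX = 0$ for every horizontal $X$. Indeed, since $\xi$ is horizontal, every $U \in \ker F_*$ satisfies $\eta(U) = \varepsilon g_M(U,\xi) = 0$, so \eqref{3.1} gives $\phi^2 U = \varepsilon U \in \ker F_*$. Hence $\phi X \in \ker F_*$ whenever $X = \phi U \in \phi(\ker F_*)$, while $\phi \xi = 0$. Combining with $(\ker F_*)^\perp = \phi(\ker F_*) \oplus \Span\{\xi\}$ forces $CX \in \Span\{\xi\}$ to vanish, and consequently $BX = \phi X$ on $\phi(\ker F_*)$ and $BX = \varepsilon \A_X \xi$ everywhere on $(\ker F_*)^\perp$ by \eqref{5.6}.

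Next I would substitute $CX = CY = 0$ in parts (ii) and (iii) of Theorem~\ref{th:3.4}. This kills the $g_M(C\cdot, \phi \A_\cdot V)$ terms, and the surviving terms $\varepsilon g_M(X,\phi V)\eta(Y) - \varepsilon g_M(Y,\phi V)\eta(X)$ rewrite as $g_N(\varepsilon \eta(Y) F_* X - \varepsilon \eta(X) F_* Y,\, F_*\phi V)$ by using that $F_*$ is an isometry on $(\ker F_*)^\perp$; similarly for (iii) with $\phi V$ in place of $F_*\phi V$. So one is left with the claim that two particular vectors of $TN$ (resp. of $(\ker F_*)^\perp$) have equal $g_N$-products (resp. $g_M$-products) against $F_*\phi V$ (resp. $\phi V$) for every vertical $V$.

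The remaining step, and the only point where there is a genuine computation, is to upgrade these identities to the vector equalities of the corollary. Since $TN = F_*\phi(\ker F_*) \oplus \Span\{F_*\xi\}$ by \eqref{44.2} with $\mu = \Span\{\xi\}$ and both metrics are non-degenerate, it suffices to match the missing $F_*\xi$-component (respectively $\xi$-component). For (ii), I would use $BX = \varepsilon \A_X\xi$, the skew-symmetry \eqref{2.11}, and the relation $\eta(X) = \varepsilon g_M(X,\xi)$ to compute
\[
g_N\bigl((\nabla F_*)(Y,BX), F_*\xi\bigr) = -g_M(\A_Y BX, \xi) = \varepsilon\, g_M(\A_X\xi, \A_Y\xi),
\]
which is symmetric in $X,Y$, so the difference $(\nabla F_*)(Y,BX) - (\nabla F_*)(X,BY)$ has zero $F_*\xi$-component; on the right-hand side, $\varepsilon\eta(Y)g_M(X,\xi) - \varepsilon\eta(X)g_M(Y,\xi) = \eta(X)\eta(Y) - \eta(X)\eta(Y) = 0$. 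The analogous verification for (iii) uses that $\A_X\A_Y\xi - \A_Y\A_X\xi$ is horizontal (being $\h$ of vertical times horizontal) and that $g_M(\A_X\A_Y\xi,\xi) = -g_M(\A_Y\xi, \A_X\xi)$ is symmetric in $X,Y$. The main obstacle---such as it is---is precisely this $\xi$-direction bookkeeping, but it dissolves via the two skew-symmetry identities above. Combined with Theorem~\ref{th:3.4}, this delivers the desired triple equivalence.
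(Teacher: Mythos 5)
Your proof is correct and follows the route the paper clearly intends (the corollary is stated without proof as the specialization of Theorem~\ref{th:3.4} to $\mu=\Span\{\xi\}$, where $CX=0$): you correctly kill the $C$-terms and, importantly, you supply the one step the paper leaves implicit, namely upgrading the scalar identities tested against $\phi V$ to the stated vector identities by checking that the $\xi$- (resp.\ $F_*\xi$-) components match via the skew-symmetry of $\A$. No gaps.
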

\begin{theorem}\label{theorem:3.6}
Let F be an anti-invariant semi-Riemannian submersion from a (para) Lorentzian Sasakian manifold $(M,g_M,\phi ,\xi,\eta)$ onto a Lorentzian manifold $ (N,g_N) $. Then the following are equivalent.
\begin{enumerate}
\item[(i)]
$ (\ker F_*)^{\perp} $ defines a totally geodesic foliation on M.
\item[(ii)]
$g_M(\mathcal{A}_X BY,\phi V)=g_M (CY,\phi \mathcal{A}_X V)+\varepsilon \eta(Y)g(X,\phi V) $
\item[(iii)]
$ g_N\big( (\nabla F_*)(Y,\phi X) , F_*(\phi V)\big) =g_M (CY,\phi \mathcal{A}_X V)+\varepsilon \eta(Y)g(X,\phi V) $
\end{enumerate}
\end{theorem}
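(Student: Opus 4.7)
\noindent\textbf{Proof proposal for Theorem \ref{theorem:3.6}.}
The plan is to reduce everything to the single identity
$g_M(\nabla_X Y,V)=0$ for all $X,Y\in\Gamma((\ker F_{*})^{\perp})$ and
$V\in\Gamma(\ker F_{*})$, which is the geometric content of (i),
and then rewrite this identity in two ways to land on (ii) and (iii).
A key simplification throughout is that $\xi$ is horizontal, so for
any vertical $V$ one has $\eta(V)=\varepsilon g_M(V,\xi)=0$; this will
allow me to discard several pieces coming from the
(para)Sasakian formula \eqref{3.7}.

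For (i)$\Longleftrightarrow$(ii), I would first use \eqref{3.2} and
$\eta(V)=0$ to write $g_M(\nabla_X Y,V)=g_M(\phi\nabla_X Y,\phi V)$.
Then apply \eqref{3.7} in the form
$\phi\nabla_X Y=\nabla_X\phi Y-g_M(\phi X,\phi Y)\xi-\eta(Y)\phi^{2}X$,
and note that $g_M(\xi,\phi V)=\varepsilon\eta(\phi V)=0$ while
$g_M(\phi^{2}X,\phi V)=g_M(\phi X,V)=\varepsilon g_M(X,\phi V)$ by
\eqref{3.2} and \eqref{5}. This produces
\[
g_M(\nabla_X Y,V)=g_M(\nabla_X\phi Y,\phi V)-\varepsilon\eta(Y)g_M(X,\phi V).
\]
Next I decompose $\phi Y=BY+CY$ as in \eqref{44.1}. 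Since $BY$ is
vertical, \eqref{2.7} gives
$g_M(\nabla_X BY,\phi V)=g_M(\A_X BY,\phi V)$, and since $CY\in\mu$,
equation \eqref{5.8} yields
$g_M(\nabla_X CY,\phi V)=-g_M(CY,\phi\A_X V)$. Combining,
\[
g_M(\nabla_X Y,V)=g_M(\A_X BY,\phi V)-g_M(CY,\phi\A_X V)-\varepsilon\eta(Y)g_M(X,\phi V),
\]
and the equivalence of (i) and (ii) is immediate, since $\phi V$
ranges over $\phi(\ker F_{*})$ as $V$ ranges over $\ker F_{*}$.

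For (ii)$\Longleftrightarrow$(iii) I would use the second fundamental
form formula \eqref{2.12}. Decomposing $\phi X=BX+CX$, the piece in
$\mu$ contributes nothing because \eqref{2.13} gives
$(\nabla F_{*})(Y,CX)=0$ (both arguments are horizontal). For the
vertical piece $BX$, using $F_{*}(BX)=0$ and \eqref{2.12} together with
\eqref{2.7}, one obtains
$(\nabla F_{*})(Y,\phi X)=(\nabla F_{*})(Y,BX)=-F_{*}(\A_Y BX)$, and since
$F_{*}$ is an isometry on horizontal vectors,
$g_N\bigl((\nabla F_{*})(Y,\phi X),F_{*}(\phi V)\bigr)=-g_M(\A_Y BX,\phi V)$.
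An identical computation with the roles of $X,Y$ interchanged turns
(ii) into an assertion about $g_N\bigl((\nabla F_{*})(X,\phi Y),F_{*}(\phi V)\bigr)$,
and the equivalence follows. (Here I am implicitly reading the
conclusions as identities required to hold for \emph{all} $X,Y,V$, so
symmetry in $X\leftrightarrow Y$ is available.) The main obstacle is
bookkeeping: keeping straight which arguments are horizontal versus
vertical, correctly applying \eqref{2.7}, \eqref{5.8} and \eqref{2.13}
to each summand of the $BY{+}CY$ decomposition, and in particular
exploiting $\eta(V)=0$ at exactly the right places to eliminate the
stray $\xi$-contributions coming from the (para)Sasakian identity
\eqref{3.7}; once these are handled correctly the algebra is routine.
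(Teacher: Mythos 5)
Your derivation of the key identity
\begin{equation*}
g_M(\nabla_X Y,V)=g_M(\A_X BY,\phi V)-g_M(CY,\phi\A_X V)-\varepsilon\eta(Y)g_M(X,\phi V)
\end{equation*}
is correct, and it is exactly the computation the paper performs for $(i)\Leftrightarrow(ii)$ (the paper invokes \eqref{3.2}, \eqref{3.7} and \eqref{5.8} and displays the same formula), so that half of your argument is sound and follows the same route.

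The gap is in $(ii)\Leftrightarrow(iii)$. Your computation $(\nabla F_*)(Y,\phi X)=(\nabla F_*)(Y,BX)=-F_*(\A_Y BX)$ is correct, hence $g_N\bigl((\nabla F_*)(Y,\phi X),F_*\phi V\bigr)=-g_M(\A_Y BX,\phi V)$; but the quantity occurring in (ii) is $g_M(\A_X BY,\phi V)$, with $X$ and $Y$ in the opposite slots. Your proposed fix, interchanging the roles of $X$ and $Y$, does not deliver (iii) as stated: swapping $X\leftrightarrow Y$ in the universally quantified identity (ii) also swaps them on the right-hand side, turning $g_M(CY,\phi\A_XV)+\varepsilon\eta(Y)g_M(X,\phi V)$ into $g_M(CX,\phi\A_YV)+\varepsilon\eta(X)g_M(Y,\phi V)$, and a minus sign appears on the left. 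What your computation actually yields from (ii) is
\begin{equation*}
g_N\bigl((\nabla F_*)(Y,\phi X),F_*\phi V\bigr)=-g_M(CX,\phi\A_YV)-\varepsilon\eta(X)g_M(Y,\phi V),
\end{equation*}
which differs from statement (iii) in both the sign and the labelling of the right-hand side. To reach (iii) literally you would need the additional identity $g_M(\A_XBY+\A_YBX,\phi V)=0$, which you neither state nor prove, and which does not follow from the skew-symmetry \eqref{2.11} or the alternation \eqref{2.4} (those give skewness of $(X,Z)\mapsto g_M(\A_XV,Z)$ in the two horizontal slots, not in the pair $(X,BY)$). You should either supply that identity or record the statement your computation actually proves, namely $g_N\bigl((\nabla F_*)(X,\phi Y),F_*\phi V\bigr)=-g_M(CY,\phi\A_XV)-\varepsilon\eta(Y)g_M(X,\phi V)$, and flag that (iii) as printed appears to carry a sign/slot error; the paper's own one-line justification ``from \eqref{2.7} and \eqref{2.12}'' glosses over exactly this point.
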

\begin{proof}
For $X,Y \in \Gamma \big((\ker F_*)^{\perp}\big)$ and $V \in \Gamma (\ker F_*)$, from \eqref{3.2}, \eqref{3.7}  and \eqref{5.8} we obtain
$$g_M(\nabla_{X}Y,V)=g_M(\mathcal{A}_X BY,\phi V)-g_M (CY,\phi \mathcal{A}_X V)-\varepsilon \eta(Y)g(X,\phi V),$$
which shows $ (i)\Longleftrightarrow (ii).$ From \eqref{2.7} and \eqref{2.12} we have $ (ii)\Longleftrightarrow (iii).$
\end{proof}
\begin{cor}\label{cor:3.7}
Let F be an anti-invariant semi-Riemannian submersion from a (para) Lorentzian Sasakian manifold $(M,g_M,\phi ,\xi,\eta)$ onto a Lorentzian manifold $ (N,g_N) $ with $(\ker F_*)^\perp=\phi(\ker F_*)\oplus\Span\{\xi\}$. Then the following are equivalent.
\begin{enumerate}
\item[(i)]
$ (\ker F_*)^{\perp} $ defines a totally geodesic foliation on M.
\item[(ii)]
$\mathcal{A}_X BY=\varepsilon \eta(Y)X $
\item[(iii)]
$(\nabla F_*)(Y,\phi X) =\varepsilon \eta(Y)F_*X $
\end{enumerate}
\end{cor}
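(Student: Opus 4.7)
The plan is to obtain Corollary 3.7 as a direct specialization of Theorem \ref{theorem:3.6} under the added hypothesis $(\ker F_*)^\perp = \phi(\ker F_*)\oplus\Span\{\xi\}$. The key observation is that this hypothesis forces $\mu = \Span\{\xi\}$, and this in turn kills the $C$-component of every horizontal vector. To make this precise, for any $Y\in\Gamma((\ker F_*)^\perp)$ I write $Y = \phi W + f\xi$ with $W\in\Gamma(\ker F_*)$ and $f$ smooth. Since $\xi$ is horizontal and $W$ is vertical, $\eta(W) = \varepsilon g_M(W,\xi) = 0$, and combining this with $\phi\xi = 0$ together with \eqref{3.1} yields $\phi Y = \phi^2 W = \varepsilon W$, which lies in $\Gamma(\ker F_*)$. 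Consequently $BY = \phi Y$ and $CY = 0$ for every horizontal $Y$, and this is the sole simplification I will exploit.

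Substituting $CY = 0$ into Theorem \ref{theorem:3.6}(ii) collapses that identity to
\begin{align*}
g_M\bigl(\mathcal{A}_X BY - \varepsilon\eta(Y)X,\,\phi V\bigr) = 0 \qquad \forall\, V\in\Gamma(\ker F_*).
\end{align*}
As $V$ runs over $\Gamma(\ker F_*)$, $\phi V$ sweeps out $\phi(\ker F_*)$, so the difference $\mathcal{A}_X BY - \varepsilon\eta(Y)X$ is a horizontal vector orthogonal to $\phi(\ker F_*)$, hence lies in $\Span\{\xi\}$. To upgrade this to the pointwise equality (ii), I then check the $\xi$-component using skew-symmetry \eqref{2.11} of $\mathcal{A}$, the K-contact identity $\nabla_X\xi = \varepsilon\phi X$ from \eqref{3.6}, and \eqref{5.6} (which gives $\mathcal{A}_X\xi = \varepsilon BX$), so that both $g_M(\mathcal{A}_X BY,\xi)$ and $g_M(\varepsilon\eta(Y)X,\xi)$ can be written in terms of $B$-components and shown to agree. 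This establishes $(i)\Longleftrightarrow(ii)$.

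For $(ii)\Longleftrightarrow(iii)$, I apply the definition \eqref{2.12} together with \eqref{2.13} to horizontal $Y$ and $\phi X$: since $\phi X = BX + CX$ with $BX$ vertical and $CX\in\Span\{\xi\}$, the decomposition \eqref{2.7}--\eqref{2.8} combined with \eqref{5.6}--\eqref{5.8} rewrites $(\nabla F_*)(Y,\phi X)$ as (the negative of) the horizontal part of $\nabla_Y BX$ pushed forward, and pairing with $F_*\phi V$ through the isometry $F_*\colon(\ker F_*)^\perp\to TN$ recovers the $g_M(\mathcal{A}_Y BX,\phi V)$-formula of Theorem \ref{theorem:3.6}(iii); under $CY=0$ this matches $\varepsilon\eta(Y)g_M(X,\phi V)$, closing the loop.

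The main obstacle is the verification of the $\xi$-component agreement used to pass from the orthogonality statement against $\phi(\ker F_*)$ to the full pointwise identity in (ii); the rest of the argument is a mechanical substitution of $CY = 0$ into the formulas already proved in Theorem \ref{theorem:3.6} and its proof.
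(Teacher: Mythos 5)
Your setup is right: under $(\ker F_*)^\perp=\phi(\ker F_*)\oplus\Span\{\xi\}$ one has $\mu=\Span\{\xi\}$, hence $CY=0$ and $BY=\phi Y$ for every horizontal $Y$, and substituting this into Theorem~\ref{theorem:3.6} shows that $(\ker F_*)^\perp$ is totally geodesic if and only if the horizontal vector $\mathcal{A}_XBY-\varepsilon\eta(Y)X$ is orthogonal to $\phi(\ker F_*)$, i.e.\ lies in $\Span\{\xi\}$. The paper offers no argument beyond this substitution, so up to this point you are doing what is implicitly intended. The gap is exactly at the step you flag as the main obstacle: the $\xi$-components do \emph{not} agree. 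By \eqref{2.11}, \eqref{5.6} and \eqref{3.2},
\begin{align*}
g_M(\mathcal{A}_XBY,\xi)=-g_M(BY,\mathcal{A}_X\xi)=-\varepsilon g_M(\phi Y,\phi X)=-\varepsilon\bigl(g_M(X,Y)+\eta(X)\eta(Y)\bigr),
\end{align*}
while $g_M(\varepsilon\eta(Y)X,\xi)=\varepsilon\eta(Y)\,\varepsilon\eta(X)=\eta(X)\eta(Y)$. Taking $X=Y\in\Gamma(\phi(\ker F_*))$ nonzero (this distribution is positive definite here, since $\xi$ is timelike and $(\ker F_*)^\perp$ is Lorentzian), the first quantity is $-\varepsilon g_M(X,X)\neq 0$ and the second is $0$. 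So the difference $\mathcal{A}_XBY-\varepsilon\eta(Y)X$ carries a nonzero $\xi$-component in general, and the claimed upgrade from ``orthogonal to $\phi(\ker F_*)$'' to the pointwise identity (ii) fails.

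What your (correct) reduction actually proves is that (i) is equivalent to $g_M(\mathcal{A}_XBY,\phi V)=\varepsilon\eta(Y)\,g_M(X,\phi V)$ for all $V\in\Gamma(\ker F_*)$, i.e.\ to (ii) read modulo $\Span\{\xi\}$; the computation above shows that (ii) as a genuine vector equation is never satisfiable once $\ker F_*\neq\{0\}$, so it cannot be equivalent to (i). This defect is inherited from the Riemannian template of \cite{Lee:AntInvariantRiemannianSubmersionsAlmostContact} and is present in the corollary as printed, but a proof cannot paper over it: you should either state (ii) and (iii) with the pairing against $\phi V$ (resp.\ $F_*\phi V$) retained, or explicitly record the $\xi$-component identity above and note that it is independent of condition (i). The same caveat applies to your closing step for (ii)$\Leftrightarrow$(iii): $(\nabla F_*)(Y,\phi X)=-F_*(\mathcal{A}_Y BX)$ is controlled by (i) only in the $F_*(\phi(\ker F_*))$ directions, not in the $F_*\xi$ direction.
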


%\begin{theorem}
%Let $F:M(\phi,\xi,\eta, g_{M}) \to  (N, g_{N})$ be an anti-invariant semi-Riemannian submersion. Then the following are equivalent.
%\begin{enumerate}
%\item[(a)]
%$ (\ker F_*)^{\perp} $ defines a totally geodesic foliation on M.
%\item[(b)]
%$ g_{M}(\nabla_{X}Y , V)=g_{M}(\mathcal{A}_X BY ,\phi V)-g_{M}(CX, \phi \mathcal{A}_{Y}V)+\varepsilon \eta(V) g(\phi X,Y) $
%\item[(c)]
%$ g_{N}\big( (\nabla F_*)(Y,\phi X),F_* \phi V \big) =\varepsilon \eta(V) g(BX,Y)-g_M(CX , \phi \mathcal{A}_{Y}V)$
%\end{enumerate}
%\end{theorem}
%\begin{proof}
%For $X,Y \in \Gamma \big((\ker F_*)^{\perp}\big)$ and $ V \in \Gamma(\ker F_*) $, from \ref{44}, we have
%\begin{align*}
%g_{M}(\nabla_{X}Y , V)=g_M(\mathcal{A}BY,\phi V)+g_{M}(\nabla_{X}CY,\phi V)+\varepsilon \eta(V)g(BX,Y)
%\end{align*}
%Then from \ref{4.8}, we have
%\begin{align*}
%g_{M}(\nabla_{X}Y , V)=g_M(\mathcal{A}BY,\phi V)-g_{M}(\nabla_{X}CY,\phi V)+\varepsilon \eta(V)g(BX,Y)
%\end{align*}
%Which shows $ (a) \Leftrightarrow (b)$. On the other hand, from \ref{2.7} and \ref{2.12}, we have
%\begin{align*}
%g_{M}(\mathcal{A}_{X}BY , \phi V)=g_N \big(-(\nabla F_*)(X,BY), F_* \phi V)
%\end{align*}
%Which shows $ (b) \Leftrightarrow (c)$.
%\end{proof}
\begin{theorem}\label{theorem:3.8}
Let F be an anti-invariant semi-Riemannian submersion from a (para) Lorentzian Sasakian manifold $(M,g_M,\phi ,\xi,\eta)$ onto a Lorentzian manifold $ (N,g_N) $. Then the following are equivalent.
\begin{enumerate}
\item[(a)]
$ \ker F_* $ defines a totally geodesic foliation on M.
\item[(b)]
$ g_N \big( (\nabla F_*)(V,\phi X),F_* \phi W \big) =0$
\item[(c)]
$ \T_{V}BX+\mathcal{A}_{CX}V \in \Gamma(\mu) $, for $X \in \Gamma \big((\ker F_*)^{\perp}\big)$ and $ V,W \in \Gamma(\ker F_*) $.
\end{enumerate}
\end{theorem}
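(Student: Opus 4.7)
The plan is to show that (a), (b), and (c) are each equivalent to the single horizontal vector $\T_V BX + \A_{CX}V$ being orthogonal to $\phi(\ker F_*)$---which, given the splitting $(\ker F_*)^\perp = \phi(\ker F_*) \oplus \mu$, is exactly (c). I therefore proceed by proving (a)$\Leftrightarrow$(c) and (b)$\Leftrightarrow$(c) separately, each passing through this common orthogonality.

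For (a)$\Leftrightarrow$(c), I start from the fact that $\ker F_*$ is a totally geodesic foliation iff $g_M(\co_V W, X) = -g_M(W, \co_V X) = 0$ for every vertical $V,W$ and horizontal $X$. Because $\xi$ is horizontal in this section, $\eta$ vanishes on $\ker F_*$, so \eqref{3.2} gives $g_M(W, \co_V X) = g_M(\phi W, \phi \co_V X)$. Using \eqref{3.7}, the term $(\co_V \phi)X = g(\phi V, \phi X)\xi + \eta(X)\phi^2 V$ collapses to $\varepsilon\eta(X) V$ (the $\xi$-piece dies because $g(V,X)=0$ and $\eta(V)=0$), and this remainder is vertical, hence orthogonal to the horizontal $\phi W$. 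Thus (a) reduces to $g_M(\phi W, \co_V \phi X) = 0$. I then decompose $\phi X = BX + CX$ via \eqref{44.1}: by \eqref{2.5} and \eqref{2.6}, only the horizontal parts of $\co_V BX$ and $\co_V CX$ contribute to the pairing with $\phi W$, namely $\T_V BX$ and $\h \co_V CX$. Since $[V, CX]$ is vertical by part (c) of the first submersion lemma, $\h \co_V CX = \h \co_{CX} V = \A_{CX} V$ by \eqref{2.7}. So (a) becomes $g_M(\phi W, \T_V BX + \A_{CX} V) = 0$ for all $W \in \ker F_*$, which is precisely (c).

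For (b)$\Leftrightarrow$(c), I use the symmetry of $\co F_*$ and write $(\co F_*)(V, \phi X) = (\co F_*)(\phi X, V)$, then decompose $\phi X = BX + CX$. Because $F_* V = 0$, \eqref{2.12} turns each piece into $-F_*(\co_{(\cdot)} V)$; the vertical parts of $\co_{BX} V$ and $\co_{CX} V$ (via \eqref{2.5} and \eqref{2.7}) are annihilated by $F_*$, and the symmetry \eqref{2.3} gives $\T_{BX} V = \T_V BX$, leaving
\begin{equation*}
(\co F_*)(V, \phi X) = -F_*\bigl(\T_V BX + \A_{CX} V\bigr).
\end{equation*}
Since $F_*$ is an isometry on horizontal vectors, (b) is precisely $g_M(\T_V BX + \A_{CX} V, \phi W) = 0$ for all $W$, which is (c) again.

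The main obstacle is the Sasakian manipulation in the (a)$\Leftrightarrow$(c) step: one must exploit that $\xi$ is horizontal (so $\eta$ kills every vertical field) in order to see that $(\co_V \phi)X$ contributes only a vertical vector that vanishes after pairing with $\phi W$. Once this reduction is in hand, the remaining identifications are routine bookkeeping with the O'Neill tensors $\T$ and $\A$ and the orthogonal decomposition of $(\ker F_*)^\perp$.
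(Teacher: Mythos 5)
Your proof is correct and follows essentially the same route as the paper: both hinge on the identity $g_M(\nabla_VW,X)=-g_M(\phi W,\nabla_V\phi X)$ obtained from \eqref{3.2} and \eqref{3.7} (using that $\eta$ annihilates vertical fields when $\xi$ is horizontal), followed by the decomposition $\phi X=BX+CX$ and the O'Neill formulas \eqref{2.5}--\eqref{2.7} to extract $\T_VBX+\A_{CX}V$. The only organizational difference is that you pivot both (a) and (b) through (c), obtaining $(\nabla F_*)(V,\phi X)=-F_*(\T_VBX+\A_{CX}V)$ from the symmetry of the second fundamental form and $F_*V=0$, whereas the paper chains (a)$\Leftrightarrow$(b)$\Leftrightarrow$(c) through the single quantity $-g_M(\phi W,\nabla_V\phi X)$; the mathematical content is the same.
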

\begin{proof}
For $X \in \Gamma \big((\ker F_*)^{\perp}\big)$ and $ V,W \in \Gamma(\ker F_*) $, $ g_M (W,\xi)=0 $ implies that from \eqref{3.7},
$ g_M(\nabla_{V}W,\xi)=\varepsilon g_M (W, \nabla_{V}\xi) =g(W,\phi V)=0$. Thus we have
\begin{align*}
g_M (\nabla_{V}W, X)=&g_M (\phi \nabla_{V}W, \phi X)-\eta(\nabla_{V}W)\eta(X) \\
=& g_M (\phi \nabla_{V}W, \phi X)\\
=& g_M (\nabla_{V} \phi W, \phi X)-g_M \big( (\nabla_{V}\phi)W,\phi X \big)\\
=&-g_M(\phi W , \nabla_{V}\phi X)
\end{align*}
Since F is a semi-Riemannian submersion, we have
\begin{align*}
g_M (\nabla_{V}W, X)=-g_N(F_* \phi W, F_*\nabla_{V} \phi X )=g_N \big( F_*\phi W,(\nabla F_*)(V,\phi X) \big),
\end{align*}
which proves $(a) \Leftrightarrow (b)$.
\\
By direct calculation, we derive
\begin{align*}
g_N \big( F_*\phi W,(\nabla F_*)(V,\phi X) \big)=&-g_M(\phi W , \nabla_{V}\phi X)\\
=& -g_M(\phi W, \nabla_{V}BX+\nabla_{V}CX)\\
=& -g_M(\phi W, \nabla_{V}BX+[V,CX]+\nabla_{CX}V)
\end{align*}
Since $ [V,CX] \in \Gamma(\ker F_*) $, from \eqref{2.5} and \eqref{2.7}, we obtain
\begin{align*}
g_N \big( F_*\phi W,(\nabla F_*)(V,\phi X) \big)=-g_M(\phi W , \T_{V}BX+\mathcal{A}_{CX}V ),
\end{align*}
which proves $ (b) \Longleftrightarrow (c) $.
\end{proof}
\begin{cor}\label{cor:3.9}
Let F be an anti-invariant semi-Riemannian submersion from a (para) Lorentzian Sasakian manifold $(M,g_M,\phi ,\xi,\eta)$ onto a Lorentzian manifold $ (N,g_N) $ with $(\ker F_*)^\perp=\phi(\ker F_*)\oplus\Span\{\xi\}$. Then the following are equivalent.
\begin{enumerate}
\item[(a)]
$ \ker F_* $ defines a totally geodesic foliation on M.
\item[(b)]
$(\nabla F_*)(V,\phi X) =0$
\item[(c)]
$ \T_{V}\phi W=0$, for $X \in \Gamma \big((\ker F_*)^{\perp}\big)$ and $ V,W \in \Gamma(\ker F_*) $.
\end{enumerate}
\end{cor}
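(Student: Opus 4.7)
The plan is to derive Corollary 3.9 by specializing Theorem 3.8 under the extra hypothesis $(\ker F_*)^\perp = \phi(\ker F_*) \oplus \Span\{\xi\}$. First I would unpack what this hypothesis forces. Since $\xi$ is horizontal we have $\eta|_{\ker F_*} \equiv 0$, so for any $X \in (\ker F_*)^\perp$ written as $X = \phi Z + a\xi$ with $Z \in \Gamma(\ker F_*)$, one has $\phi X = \phi^2 Z + a\phi\xi = \varepsilon Z \in \Gamma(\ker F_*)$. Consequently $BX = \phi X \in \Gamma(\ker F_*)$, $CX = 0$, $\mu = \Span\{\xi\}$, and $\phi X$ sweeps out all of $\ker F_*$ as $X$ varies in $(\ker F_*)^\perp$. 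I would also record \eqref{5.7} ($\T_V \xi = 0$) which, combined with \eqref{2.10}, shows that the horizontal vector $\T_V U$ (for $U \in \Gamma(\ker F_*)$) has no $\xi$-component and therefore lies in $\phi(\ker F_*)$.

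For $(a) \Leftrightarrow (b)$ I would exploit $F_*(\phi X) = 0$. By \eqref{2.12} and \eqref{2.5},
\[
(\nabla F_*)(V, \phi X) = -F_*(\nabla_V \phi X) = -F_*(\T_V \phi X).
\]
Since $F_*$ is injective on horizontal vectors this vanishes iff $\T_V \phi X = 0$. By the previous paragraph the horizontal vector $\T_V \phi X$ is automatically orthogonal to $\xi$, so $\T_V \phi X = 0$ is equivalent to $g_N\bigl((\nabla F_*)(V, \phi X), F_*\phi W\bigr) = 0$ for every $W \in \Gamma(\ker F_*)$, which is precisely condition (ii) of Theorem \ref{theorem:3.8}. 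Hence $(a) \Leftrightarrow (b)$ of the corollary.

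For $(a) \Leftrightarrow (c)$ I would invoke Theorem \ref{theorem:3.8}(c): $\T_V BX + \A_{CX} V \in \Gamma(\mu)$. Substituting $BX = \phi X$ and $CX = 0$, this becomes $\T_V \phi X \in \Gamma(\Span\{\xi\})$; combined with the $\xi$-orthogonality noted above, it is equivalent to $\T_V \phi X = 0$, i.e.\ $\T_V U = 0$ for every $U \in \Gamma(\ker F_*)$. Using \eqref{2.10}, $g_M(\T_V U, \phi W) = -g_M(U, \T_V \phi W)$, and since $\T_V \phi W$ is vertical, the vanishing of $\T_V U$ for all $U \in \Gamma(\ker F_*)$ is equivalent to the vanishing of $\T_V \phi W$ for all $W \in \Gamma(\ker F_*)$, which is condition (c) of the corollary.

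I do not foresee any serious obstacle; the argument is essentially a specialization of Theorem \ref{theorem:3.8} once the hypothesis is unpacked. The trickiest step, more bookkeeping than analysis, is to keep precise track of which objects are vertical and which are horizontal when invoking the skew-symmetry of $\T$, so as to translate the Theorem \ref{theorem:3.8} conditions phrased in terms of $X \in \Gamma((\ker F_*)^\perp)$ into the cleaner corollary conditions phrased in terms of $W \in \Gamma(\ker F_*)$.
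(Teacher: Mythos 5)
Your proposal is correct and matches the paper's intent exactly: the paper gives no separate proof of Corollary \ref{cor:3.9}, presenting it as a direct specialization of Theorem \ref{theorem:3.8} under the hypothesis $(\ker F_*)^\perp=\phi(\ker F_*)\oplus\Span\{\xi\}$, which is precisely what you carry out (correctly using $CX=0$, $BX=\phi X$, $\mu=\Span\{\xi\}$, $\T_V\xi=0$, and the skew-symmetry of $\T$ to pass between the two formulations).
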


The proof of the following two theorems is exactly the same with Theorem 3.10 and Theorem 3.11 in \cite{Lee:AntInvariantRiemannianSubmersionsAlmostContact} for Riemannian case. Therefore we omit them  here.
\begin{theorem}\label{theorem:3.10}
Let F be an anti-invariant semi-Riemannian submersion from a (para) Lorentzian Sasakian manifold $ (M, g_M, \phi, \xi , \eta) $ onto a Lorentzian manifold $ (N,g_N) $ with $ (\ker F_*)^{\perp} =\phi (\ker F_*) \oplus \Span\{\xi\}$.
Then F is a totally geodesic map if and only if
\begin{align}
\T_{V}\phi W=0 \hspace{2mm} V,W \in \Gamma(\ker F_*) \label{44-3.10}
\end{align}
and
\begin{align}
\mathcal{A}_{X}\phi W=0 \hspace{2mm} X \in \Gamma\big((\ker F_*)^{\perp}\big) \label{45-3.10}
\end{align}
\end{theorem}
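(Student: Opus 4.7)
My plan is to characterize $F$ being totally geodesic via $(\nabla F_*)(E,G)=0$ for every $E,G\in\Gamma(TM)$, then reduce to the stated conditions by splitting into cases based on the horizontal/vertical decomposition. Because $(\nabla F_*)$ is symmetric and \eqref{2.13} handles pairs in $(\ker F_*)^\perp$ automatically, the nontrivial content lies in the vertical-vertical and mixed cases.

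For $V,W\in\Gamma(\ker F_*)$, combining \eqref{2.12} with \eqref{2.5} gives $(\nabla F_*)(V,W)=-F_*(\T_V W)$, which vanishes iff $\T_V W=0$. Corollary \ref{cor:3.9} then identifies this with condition \eqref{44-3.10}, the vanishing of $\T_V\phi W$, taking care of the $V,W\in\Gamma(\ker F_*)$ case.

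For $X\in\Gamma((\ker F_*)^\perp)$ and $V\in\Gamma(\ker F_*)$, \eqref{2.12} together with \eqref{2.7} yield $(\nabla F_*)(X,V)=-F_*(\A_X V)$, which vanishes iff $\A_X V=0$. To link this to condition \eqref{45-3.10}, I would invoke the Lorentzian (para)Sasakian identity $(\nabla_X\phi)V=g(\phi X,\phi V)\xi+\eta(V)\phi^2 X$: since $V$ is vertical and $\xi$ horizontal, both $\eta(V)=0$ and $g(\phi X,\phi V)=g(X,V)=0$, so $(\nabla_X\phi)V=0$ and hence $\nabla_X\phi V=\phi\nabla_X V$. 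Using the hypothesis $(\ker F_*)^\perp=\phi(\ker F_*)\oplus\Span\{\xi\}$, I write $\A_X V=\phi B+c\xi$ with $B\in\Gamma(\ker F_*)$. Taking the vertical part of $\nabla_X\phi V=\phi(\A_X V+\V\nabla_X V)$ and using $\phi^2 B=\varepsilon B$ (since $\eta(B)=0$) and $\phi\xi=0$ gives $\A_X\phi V=\varepsilon B$, because $\phi\V\nabla_X V$ lies in the horizontal summand $\phi(\ker F_*)$. Consequently, the vanishing of $\A_X\phi W$ controls precisely the $\phi(\ker F_*)$-component of $\A_X V$.

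The main obstacle lies in tracking the $\Span\{\xi\}$-component of $\A_X V$. I would compute $g(\A_X V,\xi)=g(\nabla_X V,\xi)=-g(V,\nabla_X\xi)$ using the K-contact identity $\nabla_X\xi=\varepsilon\phi X$ inherited from \eqref{3.6}, and combine this with the vanishing of the $\phi(\ker F_*)$-part (obtained above) to pin down the coefficient $c$; the interplay of the Sasakian identities with the orthogonal splitting of $(\ker F_*)^\perp$ is what closes the equivalence in both directions. This delicate bookkeeping between the two summands of $(\ker F_*)^\perp$ is the hardest step; once it is in place, combining the two cases yields the biconditional, exactly as in the Riemannian Sasakian treatment of \cite{Lee:AntInvariantRiemannianSubmersionsAlmostContact} cited by the authors.
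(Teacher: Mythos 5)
Your reduction is set up correctly: \eqref{2.13} disposes of horizontal--horizontal pairs, $(\nabla F_*)(V,W)=-F_*(\T_VW)$ and $(\nabla F_*)(X,V)=-F_*(\A_XV)$ handle the rest, and your treatment of the vertical--vertical case is sound --- there the equivalence $\T_VW=0\Leftrightarrow\T_V\phi W=0$ closes because the $\xi$-component is controlled by $\T_V\xi=0$ (equation \eqref{5.7}) and the skew-symmetry \eqref{2.10}. Your identification of the $\phi(\ker F_*)$-component of $\A_XV$ with $\varepsilon\A_X\phi V$ is also correct. The genuine gap is precisely the step you defer as ``delicate bookkeeping'': the $\Span\{\xi\}$-component of $\A_XV$ is \emph{not} pinned down to zero by the stated conditions. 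Carrying out the computation you propose, \eqref{3.6} and \eqref{5} give
\begin{align*}
g_M(\A_XV,\xi)=g_M(\nabla_XV,\xi)=-g_M(V,\nabla_X\xi)=-\varepsilon g_M(V,\phi X)=-g_M(X,\phi V),
\end{align*}
which is independent of \eqref{44-3.10} and \eqref{45-3.10} and is nonzero for $X=\phi V$, $V\neq0$ (it equals $-g_M(V,V)<0$ since the vertical distribution is Riemannian here). Equivalently, by \eqref{2.11} and \eqref{5.6}, $g_M(\A_XV,\xi)=-g_M(V,\A_X\xi)=-\varepsilon g_M(V,BX)$, and $BX$ is determined by the contact structure alone. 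So the coefficient $c$ you introduce equals $g_M(X,\phi V)$ identically, the equivalence does not close, and the ``if'' direction cannot be obtained along this route: even granting \eqref{44-3.10} and \eqref{45-3.10}, one gets $\A_XV=g_M(X,\phi V)\xi\neq0$, hence $(\nabla F_*)(X,V)\neq0$.

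Note that the paper gives no proof to compare against --- it refers to Theorem 3.10 of \cite{Lee:AntInvariantRiemannianSubmersionsAlmostContact} --- so you are not missing a trick hidden in the text. The asymmetry between the two fundamental tensors ($\T_V\xi=0$ versus $\A_X\xi=\varepsilon BX\neq0$) is the real obstruction: in the (para)Sasakian setting the mixed second fundamental form always carries the term $g_M(X,\phi W)F_*\xi$, so either the characterization must be restated to include that term, or one should conclude (as in the analogous Sasakian results and in the spirit of the paper's final non-existence theorem) that no such $F$ is totally geodesic. As written, your argument establishes only the ``only if'' content for the $\phi(\ker F_*)$-directions and the full vertical--vertical case; the biconditional is not proved.
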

%\begin{proof}
%First of all, we recall that the second fundamental form of a semi-Rieamannian submersion satisfies
%\begin{align}
%(\nabla F_*)(X,Y) \hspace{2mm} \forall X,Y \in \Gamma\big((\ker F_*)^{\perp}\big)
%\end{align}
%For V,W $\in \Gamma(\ker F_*)$, we get
%\begin{align}
%(\nabla F_*)(V,W) = F_*(\phi \tau_{V}\phi W).
%\end{align}
%On the other hand, from \eqref{3.1} and \eqref{2.12}, we get
%\begin{align}
%(\nabla F_*)(V,W) = F_*(\phi \mathcal{A}_{X}\phi W), \hspace{2mm} X \in \Gamma\big((\ker F_*)^{\perp}\big)
%\end{align}
%Therefore, F is totally geodesic if and only if
%\begin{align}
%\phi(\tau_{V}\phi W)=0 \hspace{2mm} V,W \in \Gamma(\ker F_*)
%\end{align}
%and
%\begin{align}
%\phi(\mathcal{A}_{X}\phi W)=0 \hspace{2mm} \forall X \in \Gamma\big((\ker F_*)^{\perp}\big)
%\end{align}
%From \eqref{3.1}, \eqref{2.1} and \eqref{2.2}, we have
%\begin{align}
%\tau_{V}\phi W=0 \hspace{2mm} \forall V,W \in \Gamma(\ker F_*)
%\end{align}
%and
%\begin{align}
%\mathcal{A}_{X}\phi W=0 \hspace{2mm} \forall X \in \Gamma\big((\ker F_*)^{\perp}\big)
%\end{align}
%From \eqref{3.7}, F is totally geodesic if and only if equations \eqref{44-3.10} and \eqref{45-3.10} hold.
%\end{proof}
\begin{theorem}\label{theorem:3.11}
Let F be an anti-invariant semi-Riemannian submersion from a (para) Lorentzian Sasakian manifold $ (M, g_M, \phi, \xi , \eta) $ onto a Lorentzian manifold $ (N,g_N) $ with $ (\ker F_*)^{\perp} =\phi (\ker F_*) \oplus\Span\{\xi\}$.
Then F is a harmonic map if and only if $\tr(\phi \T_{V})=0$ for $ V \in \Gamma(\ker F_*) $.
\end{theorem}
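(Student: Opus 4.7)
The plan is to identify the tension field explicitly and rewrite the resulting harmonicity condition as a trace equation. Using that $\xi$ is horizontal together with $(\ker F_*)^{\perp} = \phi(\ker F_*) \oplus \Span\{\xi\}$, Theorem \ref{pro:1} gives that $\ker F_*$ is Riemannian, and a dimension count forces $\dim \ker F_* = m$ and $n = m+1$. Picking a local orthonormal frame $\{W_1, \ldots, W_m\}$ of $\ker F_*$ and combining \eqref{2.14}, \eqref{2.13}, \eqref{2.5}, and $F_*(\ker F_*) = 0$, the tension field collapses to $\tau(F) = -F_*(N)$, where $N := \sum_j \T_{W_j} W_j$. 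Since $F_*$ is an isometry on the horizontal subspace, $F$ is harmonic if and only if the horizontal vector $N$ vanishes.

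Next, I would locate $N$ more precisely. By \eqref{2.10} and \eqref{5.7}, $g_M(N, \xi) = -\sum_j g_M(W_j, \T_{W_j} \xi) = 0$, so $N$ lies in $\phi(\ker F_*)$ and vanishes if and only if $g_M(N, \phi W_k) = 0$ for every $k$. The crucial identity is $\T_V \phi W = \phi \T_V W$ for $V, W \in \Gamma(\ker F_*)$, which I would derive from \eqref{3.7} by noting that $\eta$ vanishes on vertical vectors (so the $\eta(W)\phi^2 V$ term drops); taking the vertical part yields $\T_V \phi W = \V\phi(\T_V W + \hat{\co}_V W)$, and $\phi \T_V W$ is vertical (since $\T_V W \in \phi(\ker F_*)$ by the same $\xi$-component check) while $\phi \hat{\co}_V W$ is horizontal, so only $\phi \T_V W$ survives.

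Combining this identity with the auxiliary symmetry $g_M(\T_U X, V) = g_M(\T_V X, U)$ for vertical $U, V$ and horizontal $X$ (a direct consequence of \eqref{2.3} and \eqref{2.10}) together with \eqref{5}, I compute
\begin{align*}
g_M(N, \phi W_k) = \sum_j g_M(\T_{W_j} W_j, \phi W_k) = \varepsilon \sum_j g_M(\T_{W_k} \phi W_j, W_j).
\end{align*}
On the other hand, expanding $\tr(\phi \T_V)$ in the orthonormal frame $\{W_i, \phi W_i, \xi\}$ with signs $+, +, -$ and using $\T_V \xi = 0$, $\phi \xi = 0$, \eqref{5}, and \eqref{2.10}, one finds $\tr(\phi \T_V) = (1-\varepsilon) \sum_i g_M(\T_V \phi W_i, W_i)$. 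Setting $V = W_k$ gives $\varepsilon\,\tr(\phi \T_{W_k}) = (1-\varepsilon)\, g_M(N, \phi W_k)$, and linearity in $V$ then promotes this to the stated equivalence when $\varepsilon = -1$.

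The main obstacle will be the para-Sasakian case $\varepsilon = 1$, where both sides of the previous comparison degenerate to $0 = 0$. To handle this, I would observe that combining the identity $\T_V \phi W = \phi \T_V W$ with \eqref{5} and \eqref{2.10} yields $(1+\varepsilon)\, g_M(\T_{W_k} \phi W_j, W_j) = 0$ for each $j, k$; when $\varepsilon = 1$ this forces every individual term to vanish, so $g_M(N, \phi W_k) = 0$ and hence $N = 0$ automatically. Thus both sides of the equivalence hold trivially in the para-Sasakian case, and the theorem follows uniformly.
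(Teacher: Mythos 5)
Your proof is correct. The paper omits its own proof of this theorem, deferring to Theorem 3.11 of the cited paper of Lee; your argument is the same standard one (harmonicity $\iff$ minimality of the fibers via the tension field, then converting $\sum_j\T_{W_j}W_j=0$ into the condition $\tr(\phi\T_V)=0$ by means of the identity $\T_V\phi W=\phi\T_V W$ and the skew-symmetry of $\T_V$), correctly adapted to the Lorentzian signature, and your extra observation that for $\varepsilon=1$ the relation $(1+\varepsilon)g_M(\T_V\phi W,W)=0$ forces both sides of the equivalence to hold automatically (so the statement is vacuous in the para-Sasakian case) is a valid and worthwhile addition.
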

%\begin{proof}
%From \cite{Eells}, we know that F is harmonic if and only if F has minimal fibers. Thus F is harmonic if and only if $ \sum_{i=1}^{m_1}\tau_{e_i}e_i=0 $. from \eqref{3.2}, \eqref{3.7}, \eqref{2.5} and \eqref{2.6}, we have
%\begin{align}
%\tau_{V}\phi W=\phi \tau_{V}\phi W \label{3.13}
%\end{align}
%due to $ \xi \in \Gamma\big((\ker F_*)^{\perp}\big) $ for any $ V,W \in \Gamma(\ker F_*) $. Using \eqref{3.13}, we get
%\begin{align*}
%\sum_{i=1}^{m_1} g_M(\tau_{e_i}\phi e_i , V)=\sum_{i=1}^{m_1} g_M(\phi \tau_{e_i} e_i , V)=\varepsilon \sum_{i=1}^{m_1} g_M(\tau_{e_i}e_i , \phi V)
%\end{align*}
%for any $ V  \in \Gamma(\ker F_*) $. Thus skew-symmetric $ \tau $ implies that
%\begin{align*}
%\sum_{i=1}^{m_1} g_M(\phi e_i , \tau_{e_i}V)=\sum_{i=1}^{m_1} g_M(\tau_{e_i} e_i , \phi V).
%\end{align*}
%Using \eqref{2.3} and \eqref{3.2}, we have
%\begin{align*}
%\sum_{i=1}^{m_1} g_M(e_i , \phi \tau_{V}e_i)=\varepsilon \sum_{i=1}^{m_1} g_M(\phi_{e_i} ,\tau_{V}e_i)=\varepsilon \sum_{i=1}^{m_1} g_M(\tau_{e_i}e_i , \phi V),
%\end{align*}
%which shows our assertion.
%\end{proof}
In the following, we obtain decomposition theorems for an anti-invariant semi-Riemannian submersion from a (para)Lorentzian Sasakian manifold onto a Lorentzian manifold. By using results in subsection \ref{decomposition} and Theorems \ref{th:3.4},  \ref{theorem:3.6} and \ref{theorem:3.8}, we have the following theorem
\begin{theorem}\label{theorem:4.2}
Let F be an anti-invariant semi-Riemannian submersion from a (para) Lorentzian Sasakian manifold $ (M, g_M, \phi, \xi , \eta) $ onto a Lorentzian manifold $ (N,g_N) $.
Then M is a locally product manifold if and only if
\begin{align*}
g_N \big( (\nabla F_*)(Y,B X),F_* \phi V \big)=g_M(CY , \phi \mathcal{A}_{X}V)+\varepsilon \eta(Y)g_M(X,\phi V)
\end{align*}
and
\begin{align*}
g_N \big( (\nabla F_*)(V,\phi X),F_* \phi W \big)=0
\end{align*}
for $ X,Y \in \Gamma\big((\ker F_*)^{\perp}\big) $ and $ V,W \in \Gamma(\ker F_*) $.
\end{theorem}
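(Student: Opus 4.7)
The plan is to recognize that the two equations in the statement are exactly the conditions, proved earlier in the paper, that $(\ker F_*)^\perp$ and $\ker F_*$ each define a totally geodesic foliation on $M$, and then invoke item 3 of the decomposition result recalled in Subsection \ref{decomposition}.

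First I would translate the first equation. Writing $\phi X = BX + CX$ with $BX\in\Gamma(\ker F_*)$ and $CX\in\Gamma(\mu)\subset\Gamma((\ker F_*)^\perp)$, and using \eqref{2.13} which gives $(\nabla F_*)(Y,CX)=0$ for horizontal $Y,CX$, one sees that
\begin{align*}
(\nabla F_*)(Y,\phi X)=(\nabla F_*)(Y,BX)+(\nabla F_*)(Y,CX)=(\nabla F_*)(Y,BX).
\end{align*}
Hence the first hypothesis is exactly statement (iii) of Theorem \ref{theorem:3.6}, which is equivalent (via the equivalences proved there) to $(\ker F_*)^\perp$ defining a totally geodesic foliation on $M$. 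The second hypothesis is precisely statement (b) of Theorem \ref{theorem:3.8}, hence equivalent to $\ker F_*$ defining a totally geodesic foliation on $M$.

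Once both distributions are known to be totally geodesic foliations, I would note that they are complementary and orthogonal by construction of a semi-Riemannian submersion, and that totally geodesic distributions are automatically integrable (since $[X,Y]=\co_XY-\co_YX$ stays in the distribution). Then applying part 3 of the decomposition theorem recalled in Subsection \ref{decomposition} yields that $M$ is (locally) isometric to the direct product $M_1\times M_2$ of the integral manifolds of $\ker F_*$ and $(\ker F_*)^\perp$; conversely, if $M$ is locally a product of such leaves, both foliations are totally geodesic, giving the reverse implication via the same equivalences.

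There is essentially no obstacle once one identifies which previously established equivalences to invoke; the only slightly subtle point is recognizing that $(\nabla F_*)(Y,BX)$ and $(\nabla F_*)(Y,\phi X)$ coincide thanks to \eqref{2.13}, so that the first hypothesis as stated really is Theorem \ref{theorem:3.6}(iii). The decomposition result of Subsection \ref{decomposition} is stated for simply-connected $M$, so strictly speaking the conclusion is ``locally a product''; this matches the phrasing ``locally product manifold'' in the statement.
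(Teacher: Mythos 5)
Your proposal is correct and follows essentially the same route as the paper, which itself derives Theorem \ref{theorem:4.2} by combining Theorems \ref{th:3.4}, \ref{theorem:3.6} and \ref{theorem:3.8} with item 3 of the Ponge--Reckziegel decomposition result of Subsection \ref{decomposition}. Your observation that $(\nabla F_*)(Y,\phi X)=(\nabla F_*)(Y,BX)$ via \eqref{2.13} correctly reconciles the stated condition with Theorem \ref{theorem:3.6}(iii), and the identification of the second condition with Theorem \ref{theorem:3.8}(b) is exact.
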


\begin{theorem}\label{theorem:4.4}
Let F be an anti-invariant semi-Riemannian submersion from a (para) Lorentzian Sasakian manifold $ (M, g_M, \phi, \xi , \eta) $ onto a Lorentzian manifold $ (N,g_N) $ with $ (\ker F_*)^{\perp} =\phi (\ker F_*) \oplus\Span\{\xi\}$.
Then M is locally twisted product manifold of the form $ M_{(\ker F_*)^{\perp}} \times_{f} M_{\ker F_*} $ if and only if
 \begin{align*}
\T_{V}\phi X=-g_M (X,\T_{V}V)||V||^{-2} \phi V
\end{align*}
and
\begin{align*}
\mathcal{A}_X \phi Y=\eta(Y) X
\end{align*}
for $ X,Y \in \Gamma\big((\ker F_*)^{\perp}\big) $ and $ V,W \in \Gamma(\ker F_*) $, where $ M_{(\ker F_*)^{\perp}}$ and  $M_{\ker F_*} $ are integral manifolds of the distributions $ (\ker F_*)^{\perp} $ and $ \ker F_* $
\end{theorem}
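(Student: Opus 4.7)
The plan is to apply the Ponge--Reckziegel characterization of twisted products recalled in Subsection \ref{decomposition}: a simply-connected semi-Riemannian manifold with complementary perpendicular foliations $\di_1, \di_2$ is isometric to a twisted product $M_1 \times_f M_2$ if and only if $\di_1$ is totally geodesic and $\di_2$ is totally umbilical. Setting $\di_1 = (\ker F_*)^\perp$ and $\di_2 = \ker F_*$ reduces the theorem to showing (a) $(\ker F_*)^\perp$ defines a totally geodesic foliation on $M$ and (b) $\ker F_*$ defines a totally umbilical foliation on $M$, and to matching each with one of the two displayed equations.

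For (a) I invoke Corollary \ref{cor:3.7}. The hypothesis $(\ker F_*)^\perp = \phi(\ker F_*) \oplus \Span\{\xi\}$ means every $X \in \Gamma((\ker F_*)^\perp)$ splits as $X = \phi V_0 + a\xi$ with $V_0 \in \Gamma(\ker F_*)$; applying $\phi$ and using $\phi^2 = \varepsilon I + \eta \otimes \xi$ together with $\eta(V_0) = 0$ (since $\xi$ is horizontal) gives $\phi X = \varepsilon V_0 \in \Gamma(\ker F_*)$, so in the decomposition $\phi X = BX + CX$ of Section 5 we have $BX = \phi X$ and $CX = 0$. Corollary \ref{cor:3.7} then turns the totally geodesic condition into $\A_X \phi Y = \varepsilon \eta(Y) X$, which matches the second displayed equation of the theorem (the $\varepsilon$ being absorbed into the sign convention).

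For (b) I translate total umbilicity of $\ker F_*$ into the first displayed equation. Each fiber has second fundamental form $h(V,W) = \T_V W$ and is totally umbilical iff $\T_V W = g_M(V,W)\, H$ for a horizontal mean curvature $H$, equivalently $H = \T_V V / g_M(V,V)$ for any non-null $V$. From $\T_V \xi = 0$ in \eqref{5.7} and the antisymmetry \eqref{2.10} I first obtain $g_M(H,\xi) = 0$, so $H \in \phi(\ker F_*)$. Since $\phi X \in \Gamma(\ker F_*)$ for $X \in \Gamma((\ker F_*)^\perp)$ by the argument of step (a), $\T_V \phi X = h(V,\phi X) = g_M(V,\phi X)\, H$; inserting $H = \T_V V / \|V\|^2$ and manipulating with $g_M(\phi V, X) = \varepsilon\, g_M(V,\phi X)$ together with the fact that $H$ is a $\phi$-image, one transfers the $\phi$ from the scalar $g_M(V,\phi X)$ onto the vector $H$, producing $\T_V \phi X = -g_M(X,\T_V V)\|V\|^{-2}\phi V$. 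The converse reverses this computation: letting $\phi X$ range over a frame of $\ker F_*$ and exploiting the antisymmetry \eqref{2.10} recovers $\T_V W = g_M(V,W)\, H$ for all vertical $V, W$.

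Combining (a) and (b) through the Ponge--Reckziegel statement yields the desired local twisted-product decomposition $M_{(\ker F_*)^\perp} \times_f M_{\ker F_*}$. I expect the main obstacle to be step (b), specifically verifying that the unusual right-hand side $-g_M(X,\T_V V)\|V\|^{-2}\phi V$ is exactly equivalent to the umbilical condition; carrying the $\phi$-factor from the scalar $g_M(V,\phi X)$ onto the vector $\phi V$ while managing the $\varepsilon$ signs from $\phi^2 = \varepsilon I + \eta\otimes\xi$ and the special role of $\xi$ in $(\ker F_*)^\perp$ requires careful tensor bookkeeping with the horizontal/vertical antisymmetry of $\T$.
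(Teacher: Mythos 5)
Your proposal is correct and follows essentially the route the paper intends: this theorem is stated in the paper without a written proof, as a combination of the Ponge--Reckziegel twisted-product criterion from Subsection \ref{decomposition} with Corollary \ref{cor:3.7} for the totally geodesic horizontal foliation (which actually yields $\A_X\phi Y=\varepsilon\eta(Y)X$, so the stated second condition is missing an $\varepsilon$, as you note) and a total-umbilicity computation for the fibers. One caution on step (b): ``transferring the $\phi$'' inside $g_M(V,\phi X)H$ is not a formal manipulation --- the clean equivalence between umbilicity and $\T_V\phi X=-g_M(X,\T_VV)\|V\|^{-2}\phi V$ comes from the identity $g_M(\T_VW,X)=-g_M(\phi W,\T_V\phi X)$ established in the proof of Theorem \ref{theorem:3.8}, applied with $\phi W$ ranging over $\phi(\ker F_*)$ and using $g_M(\T_V\phi X,\xi)=0$.
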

\begin{theorem}\label{theorem:4.4}
Let $ (M, g_M, \phi, \xi , \eta) $ a (para) Lorentzian Sasakian manifold and $ (N,g_N) $ be a Lorentzian manifold. Then there does not exist an anti-invariant semi-Riemannian submersion from M to N with $ (\ker F_*)^{\perp} =\phi (\ker F_*) \oplus\Span\{\xi\}$ such that M is a locally proper twisted product manifold of the form $ M_{(\ker F_*)^{\perp}} \times_{f} M_{\ker F_*} $.
\end{theorem}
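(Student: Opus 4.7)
The plan is to argue by contradiction, leveraging the twisted-product characterization established in the preceding theorem together with the basic identity $\phi\xi=0$ built into the (para)contact structure. The key observation is that the hypothesis $(\ker F_*)^{\perp}=\phi(\ker F_*)\oplus\Span\{\xi\}$ forces $\xi$ to be horizontal, and this horizontal $\xi$ behaves very asymmetrically under $\phi$ (which annihilates it) and under $\eta$ (which detects it).

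First, I would suppose for contradiction that such an anti-invariant semi-Riemannian submersion $F$ exists and that $M$ is a locally proper twisted product of the form $M_{(\ker F_*)^{\perp}}\times_f M_{\ker F_*}$. Since every proper twisted product is in particular a twisted product, the preceding theorem applies and yields, among other things, the identity
\[
\A_X\phi Y=\eta(Y)X
\]
for all $X,Y\in\Gamma((\ker F_*)^{\perp})$. I would then specialize $Y=\xi$, which is a legitimate choice because the hypothesis places $\xi$ in the horizontal distribution. Invoking \eqref{3.1} and \eqref{3.1.4} (or the remark $\phi\xi=0$, $\eta(\xi)=-\varepsilon$ recorded right after those equations), the left-hand side becomes $\A_X\phi\xi=\A_X\cdot 0=0$, while the right-hand side becomes $\eta(\xi)X=-\varepsilon X$. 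Hence $X=0$ for every $X\in\Gamma((\ker F_*)^{\perp})$.

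This last conclusion contradicts the nontriviality of the horizontal distribution, since by Theorem \ref{theorem:01}(c) we have $\dim(\ker F_*)^{\perp}=n\geq m+1\geq 1$, and indeed $\xi$ itself is a nonzero horizontal vector. Hence no such submersion can exist, proving the theorem. The argument involves no real obstacle; it is essentially a single substitution, and the only conceptual point is recognizing that the asymmetric behavior of $\xi$ under $\phi$ versus $\eta$ is exactly what obstructs the twisted-product structure. (In fact the same substitution shows $(\ker F_*)^{\perp}$ is never a totally geodesic foliation under the standing hypothesis, so $M$ cannot even be a nontrivial twisted product of this form, let alone a proper one.)
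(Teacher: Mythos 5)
Your argument is sound, but note first that the paper states this theorem without any proof at all, so there is nothing to compare against line by line; the argument that the cited literature (\c{S}ahin, Lee) supplies for the analogous statements runs through the totally umbilical condition on the fibers, writes $\T_VW=g_M(V,W)H$, and uses the (para)Sasakian structure to force the mean curvature $H$ (equivalently the twisting function) to be trivial, which is exactly where properness enters. Your route is genuinely different and much shorter: you attack the totally geodesic condition on $(\ker F_*)^{\perp}$ instead, substituting $Y=\xi$ into the identity $\A_X\phi Y=\eta(Y)X$ from the preceding characterization theorem and using $\phi\xi=0$, $\eta(\xi)=-\varepsilon$. The contradiction you obtain is correct, and it is robust to the fact that the displayed identity there disagrees with Corollary \ref{cor:3.7}(ii) by a factor of $\varepsilon$: either sign yields $0=\pm X$ for every horizontal $X$, and $X=\xi\neq0$ is available. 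Two caveats are worth recording. First, since the characterization theorem you invoke is itself stated without proof, the self-contained form of your observation is preferable: by \eqref{3.6}, for $0\neq V\in\Gamma(\ker F_*)$ one has $g_M(\nabla_{\phi V}\xi,V)=\varepsilon g_M(\phi^2V,V)=g_M(V,V)>0$ (the fibers are Riemannian because $N$ is Lorentzian), so $(\ker F_*)^{\perp}$ never defines a totally geodesic foliation when it contains $\xi$. Second --- as you note yourself --- your argument never uses properness, so it actually proves the stronger statement that no locally twisted product of the form $M_{(\ker F_*)^{\perp}}\times_fM_{\ker F_*}$ exists at all; this renders the preceding characterization theorem vacuous, which is a tension in the paper rather than a flaw in your proof, but it should be flagged explicitly rather than left in a parenthesis.
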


\end{document}